\newtheorem{theorem}{Theorem}[section]
\newtheorem{lemma}{Lemma}[section]
\newtheorem{pro}{Proposition}[section]
\theoremstyle{remark}
\theoremstyle{remark}
\newtheorem{example}{Example}[section]
\DeclareMathOperator{\Span}{Span}
\DeclareMathOperator{\tr}{trace}
\DeclareMathOperator{\Ric}{Ric}
\newcommand{\co}{\nabla}
\newcommand{\di}{\mathcal{D}}
\journal{Journal of \LaTeX\ Templates}
\begin{document}

\begin{frontmatter}

\title{On contact pseudo-metric manifolds satisfying a nullity condition}

%% Group authors per affiliation:
\author[mymainaddress]{Narges Ghaffarzadeh}%\fnref{myfootnote}}
%\fntext[myfootnote]{Since 1880.}
\ead{n.ghaffarzadeh@tabrizu.ac.ir}
%% or include affiliations in footnotes:
\author[mymainaddress, mycorrespondingauthor]{Morteza Faghfouri}
\cortext[mycorrespondingauthor]{Corresponding author}
\ead{faghfouri@tabrizu.ac.ir}
\address[mymainaddress]{Department of Pure Mathematics, Faculty of Mathematical Sciences, University of Tabriz, Tabriz, Iran.}

\begin{abstract}
In this paper, we aim to introduce and study $(\kappa, \mu)$-contact pseudo-metric manifold and  prove that if the $\varphi$-sectional curvature of any point of $M$ is independent of the choice of $\varphi$-section at the point, then it is constant on $M$ and  accordingly  the curvature tensor. Also, we introduce generalized  $(\kappa, \mu)$-contact pseudo-metric manifold and  prove for $n>1$, that a non-Sasakian generalized  $(\kappa, \mu)$-contact pseudo-metric manifold is a $(\kappa, \mu)$-contact pseudo-metric manifold.
\end{abstract}

\begin{keyword}
contact pseudo-metric manifold\sep $(\kappa, \mu)$-contact pseudo-metric structure.
\MSC[2010] 53C25\sep 53C50\sep 53C15.
\end{keyword}

\end{frontmatter}

\linenumbers
 \section{Introduction}
 Contact pseudo-metric structures  were first introduced by Takahashi \cite{Takahashi:SasakianManifoldWithPseud}. He defined Sasakian manifold with pseudo-metric and the classification of Sasakian manifolds of constant $\phi$-sectional curvatures. Next, K. L. Duggal  \cite{Duggal:SpaceTimeManifoldsContactStructures} and A. Bejancu \cite{Bejancu1993}  studied contact pseudo-metric structures as  a generalization of contact Lorentzian  structures and contact Riemannian structures. Recently, contact pseudo-metric manifolds and curvature of $K$-contact pseudo-Riemannian manifolds  have been studied by Calvaruso and Perrone \cite{Calvaruso.Perrone:ContactPseudoMtricManifolds} and Perrone \cite{Perrone:CurvatureKcontact}, respectively. Also Perrone  \cite{Perrone2014}, Perrone  investigated  contact pseudo-metric manifolds of constant curvature and CR manifolds.

In \cite{Blair:Contactmetricmanifoldssatisfyingnullitycondition}, D. E. Blair et al.  introduced $(\kappa,\mu)$-contact Riemannian manifold. Since then, many researchers have studied the structure  \cite{koufogiorgos:Contact.constant.curvature,boeckx:a.full.classification.of.contact.metric,Koufogiorgos:OnTheExistenceContactMetricManifolds,
faghfouriGhaffarzade:doublywarped,faghfouriGhaffarzade:invariant,faghfouriGhaffarzade:CtotallyReal}.

In this paper, we introduce and study $(\kappa, \mu)$-contact pseudo-metric manifold. The paper is organized as follows. Section $2$ contains some necessary background on contact pseudo-metric manifolds. After introducing $(\kappa,\mu)$-contact pseudo-metric manifold in section 3, we prove some relationships. In this section, we also prove if the $\varphi$-sectional curvature of any point of $M$ is independent of the choice of $\varphi$-section at the point, then it is constant on $M$ and we find the curvature tensor. In fact, our main purpose in this paper is to find  the curvature tensor of $(\kappa,\mu)$-contact pseudo-metric manifolds. In addition, we show that $M$ has constant $\varphi$-sectional curvature if and only if $\mu=\varepsilon\kappa+1$ when $\kappa\neq\varepsilon$. In section $4$, we introduce generalized  $(\kappa, \mu)$-contact pseudo-metric manifold. In this section, we also  prove for $n>1$, that a non-Sasakian generalized  $(\kappa, \mu)$-contact pseudo-metric manifold is a $(\kappa, \mu)$-contact pseudo-metric manifold.

\section{Preliminaries}
A $(2n+1)$-dimensional differentiable manifold $M$ is called an almost contact pseudo-metric manifold if there is an almost contact pseudo-metric structure $(\varphi,\xi,\eta,g)$ consisting of a $(1, 1)$ tensor field $\varphi$, a vector field $\xi$, a $1$-form $\eta$ and a compatible pseudo-Riemannian metric $g$ satisfying
\begin{gather}
\eta (\xi)= 1,\varphi^2(X)=-X+\eta(X)\xi, \label{001}\\
g(\varphi X,\varphi Y)=g(X,Y)-\varepsilon\eta(X)\eta(Y),\label{002}
\end{gather}
where $\varepsilon=\pm1$ and  $X,Y\in\Gamma(TM)$.
Remark that, by (\ref{001}) and (\ref{002}), we have
\begin{gather}
\varphi\xi =0,\eta\circ\varphi=0,\\
\eta(X) =\varepsilon g(\xi, X),\\
g(\varphi X,Y)=-g(X,\varphi Y),
\end{gather}
and $\varphi$ has rank $2n$.
 In particular, $g(\xi,\xi)=\varepsilon$ and so, the characteristic vector field $\xi$ is either space-like or time-like, but cannot be light-like and the signature of an associated metric is either $(2p+1,2n-2p)$ or $(2p,2n-2p-1)$.
An almost contact pseudo-metric structure becomes a contact pseudo-metric structure if $d\eta=\Phi$, where $\Phi(X,Y)=g( X,\varphi Y)$ is the fundamental $2$-form of $M$.

An almost contact pseudo-metric structure of $M$ is called a normal structure if  $[\varphi,\varphi]+2d\eta\otimes\xi=0$. A normal
contact pseudo-metric structure is called a Sasakian structure. It can be proved that an almost contact pseudo-metric manifold is Sasakian iff
\begin{align}
(\co_X\varphi)Y=g(X,Y)\xi-\varepsilon\eta(Y)X,
\end{align}
for any $X, Y\in \Gamma(TM)$ or  equivalently, a contact pseudo-metric structure is a Sasakian structure iff $R$ satisfies
\begin{align}\label{eq:sasakicurvature}
R(X,Y)\xi=\eta(Y)X-\eta(X)Y,
\end{align}
for $X,Y\in\Gamma(TM)$, where $R(X,Y)=[\co_X,\co_Y]-\co_{[X,Y]}$ is the curvature tensor and $\co$ is the Levi-Civita connection\cite{kumar.rani.nagachi:Onsectionalcurvatures,Calvaruso.Perrone:ContactPseudoMtricManifolds}.
In a contact pseudo-metric manifold $M^{2n+1}(\varphi,\xi,\eta,g)$, we define the $(1, 1)$-tensor fields $\ell$ and $h$ by
\begin{align}\label{050}
\ell X=R(X,\xi)\xi,\quad hX=\frac{1}{2}(\mathcal{L}_{\xi}\varphi)(X),
\end{align}
where $\mathcal{L}$ denotes the Lie derivative.
The tensors $h$ and $\ell$ are self-adjoint operators satisfying(\cite{Calvaruso.Perrone:ContactPseudoMtricManifolds,Perrone:CurvatureKcontact})
\begin{gather}
\tr(h)=\tr( h\varphi)=0,\label{051}\\
\eta\circ h=0, \quad\ell\xi =0,\label{055}\\
h\varphi=-\varphi h,\label{013}\\
h\xi =0,\label{052}\\
\co_{X}\xi =-\varepsilon\varphi X-\varphi hX,\label{033}\\
(\co_{X}\varphi)Y=\varepsilon g(\varepsilon X+hX,Y)\xi-\eta(Y)(\varepsilon X+hX),\label{031}\\
\co_{\xi}\varphi =0.\label{072}
\end{gather}
Due to the relation of (\ref{013}), if $X$ is an eigenvector of $h$ corresponding to the eigenvalue $\lambda$, then $\varphi X$ is also an eigenvector of $h$ corresponding to the eigenvalue $-\lambda$.

Let $M^{2n+1}(\varphi,\xi,\eta,g)$ be a contact pseudo-metric manifold and $X\in\ker\eta$, either space-like or time-like. We put
\begin{gather}
K(X,\xi)=\dfrac{\mathcal{R}(X,\xi ,X,\xi)}{\varepsilon g(X,X)}=\dfrac{g(\ell X,X)}{\varepsilon g(X,X)},\label{060}\\
K(X,\varphi X)=\dfrac{\mathcal{R}(X,\varphi X,X,\varphi X)}{g(X,X)^{2}}.
\end{gather}
We call $K(X,\xi)$ the $\xi$-sectional curvature determined by $X$, and $K(X,\varphi X)$ the $\varphi$-sectional curvature determined by $X$, where $\mathcal{R}(X,Y,Z,W)=g(R(Z,W)Y,X)$. A Sasakian manifold with constant $\varphi$-sectional curvature $c$
is called a Sasakian space form and is denoted by
$M(c).$
\begin{lemma}[\cite{Calvaruso.Perrone:ContactPseudoMtricManifolds}]
Let $M^{2n+1}(\varphi,\xi,\eta,g)$ be a contact pseudo-metric manifold. Then:
\begin{gather}
\co_{\xi}h=\varphi-\varphi\ell-\varphi h^{2},\label{073}\\
\varphi\ell\varphi -\ell =2(\varphi^{2}+h^{2}),\label{027}\\
\Ric(\xi,\xi)=2n-\tr(h^{2}),\label{035}\\
%(\co _{X}\Phi)(Y,Z)+(\co _{Y}\Phi)(Z,X)+(\co _{Z}\Phi)(X,Y)=0,\\
\mathcal{R}(\xi ,X,Y,Z)=\varepsilon(\co _{X}\Phi)(Y,Z)+g((\co_{Y}\varphi h)Z,X)-g((\co_{Z}\varphi h)Y,X).\label{032}
\end{gather}
\end{lemma}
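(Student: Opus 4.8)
The plan is to reduce all four identities to a single computation of the Jacobi operator $\ell$. The key device is the $(1,1)$-tensor $\psi:=\varepsilon\varphi+\varphi h$, so that (\ref{033}) reads $\co_X\xi=-\psi X$. Because $\psi\xi=0$ we get $\co_\xi\xi=0$, and expanding $\ell X=R(X,\xi)\xi$ through the definition of $R$ makes the Christoffel-type terms cancel, leaving the operator identity $\ell=\co_\xi\psi-\psi^2$. Now $\co_\xi\psi=\varphi\,\co_\xi h$ by (\ref{072}), while multiplying out with the help of (\ref{013}) gives $\psi^2=\varphi^2(\mathrm{Id}-h^2)$. Every operator in sight — $\ell$, $h$, $h^2$, $\co_\xi h$ — kills $\xi$ and maps into $\ker\eta$; for $\co_\xi h$ this follows by differentiating $h\xi=0$ and $\eta\circ h=0$ along $\xi$ and invoking $\co_\xi\xi=0$. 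Restricting to $\ker\eta$, where $\varphi^2=-\mathrm{Id}$, the identity for $\ell$ becomes
\begin{equation}\label{plan:ell}
\ell=\varphi\,\co_\xi h+\mathrm{Id}-h^2 .
\end{equation}

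From (\ref{plan:ell}) the first three claims drop out quickly. Solving for $\co_\xi h$ and applying $\varphi$ (again $\varphi^2=-\mathrm{Id}$ on $\ker\eta$) gives $\co_\xi h=\varphi-\varphi\ell-\varphi h^2$, which is (\ref{073}). Differentiating $h\varphi=-\varphi h$ from (\ref{013}) along $\xi$ and using (\ref{072}) shows that $\co_\xi h$ anti-commutes with $\varphi$; conjugating (\ref{plan:ell}) by $\varphi$ and using $h^2\varphi=\varphi h^2$ then gives $\varphi\ell\varphi=\varphi\,\co_\xi h-\mathrm{Id}+h^2$, so subtraction yields $\varphi\ell\varphi-\ell=2(h^2-\mathrm{Id})$. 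Rewriting $-\mathrm{Id}=\varphi^2$ on $\ker\eta$ and noting that both sides annihilate $\xi$ gives (\ref{027}). Finally $\Ric(\xi,\xi)=\tr\ell$; taking the trace of (\ref{plan:ell}) over $\ker\eta$ with $\tr(\varphi\,\co_\xi h)=0$ (immediate from the anti-commutation) and $\tr_{\ker\eta}\mathrm{Id}=2n$ produces (\ref{035}).

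For (\ref{032}) I would instead compute $R(Y,Z)\xi$ directly: the same cancellation as above gives $R(Y,Z)\xi=(\co_Z\psi)Y-(\co_Y\psi)Z$. The curvature symmetries give $\mathcal{R}(\xi,X,Y,Z)=g(R(Y,Z)X,\xi)=-g(R(Y,Z)\xi,X)$, into which I substitute this expression and split $\psi=\varepsilon\varphi+\varphi h$. The $\varphi h$-part reproduces $g((\co_Y\varphi h)Z,X)-g((\co_Z\varphi h)Y,X)$ verbatim, while the $\varphi$-part is $\varepsilon[g((\co_Y\varphi)Z,X)-g((\co_Z\varphi)Y,X)]$; evaluating this bracket by (\ref{031}), the $\xi$-components cancel because $h$ is self-adjoint, and what remains equals $g(Y,(\co_X\varphi)Z)=(\co_X\Phi)(Y,Z)$, once more after sliding $h$ across $g$ by self-adjointness. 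This gives (\ref{032}).

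The bilinear algebra with $\varphi$, $h$, and $\psi$ is routine; the step that needs genuine care is the splitting $TM=\ker\eta\oplus\Span\{\xi\}$, since $\varphi^2=-\mathrm{Id}+\eta\otimes\xi$ only simplifies to $\varphi^2=-\mathrm{Id}$ after restriction, and one must first confirm that every operator truly preserves $\ker\eta$ before discarding the $\eta\otimes\xi$ corrections. The one conceptual point is noticing that $\co_\xi h$ anti-commutes with $\varphi$, which is exactly what drives both (\ref{027}) and the trace in (\ref{035}).
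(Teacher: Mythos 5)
Your argument is correct, but there is nothing in this paper to compare it against: the lemma is stated in the Preliminaries with a citation to \cite{Calvaruso.Perrone:ContactPseudoMtricManifolds} and no proof is given here, so your derivation in effect supplies the missing argument directly from the stated background identities (\ref{001})--(\ref{072}). All the key steps check out. From $\co_X\xi=-\psi X$ with $\psi=\varepsilon\varphi+\varphi h$ and $\co_\xi\xi=0$, the expansion of $R(X,\xi)\xi$ does give $\ell=\co_\xi\psi-\psi^2$; then $\co_\xi\psi=\varphi\co_\xi h$ by (\ref{072}), and $\psi^2=\varphi^2(\mathrm{Id}-h^2)$ by (\ref{013}). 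Your insistence on verifying that $\ell$, $h^2$ and $\co_\xi h$ annihilate $\xi$ and map into $\ker\eta$ before discarding the $\eta\otimes\xi$ correction is exactly the right precaution, and the extensions of (\ref{073}) and (\ref{027}) from $\ker\eta$ to all of $TM$ are legitimate because both sides kill $\xi$. The anti-commutation $(\co_\xi h)\varphi=-\varphi\,\co_\xi h$, obtained by differentiating (\ref{013}) along $\xi$, indeed does double duty: it drives the conjugation step for (\ref{027}), and via cyclicity of the trace it gives $\tr(\varphi\,\co_\xi h)=\tr((\co_\xi h)\varphi)=-\tr(\varphi\,\co_\xi h)=0$, which together with $\Ric(\xi,\xi)=\tr\ell$ yields (\ref{035}). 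For (\ref{032}), with the paper's convention $\mathcal{R}(\xi,X,Y,Z)=g(R(Y,Z)X,\xi)=-g(R(Y,Z)\xi,X)$ and $R(Y,Z)\xi=(\co_Z\psi)Y-(\co_Y\psi)Z$, the splitting of $\psi$ works as you describe: formula (\ref{031}) and the self-adjointness of $h$ give $g((\co_Y\varphi)Z,X)-g((\co_Z\varphi)Y,X)=\eta(Y)g(\varepsilon Z+hZ,X)-\eta(Z)g(\varepsilon Y+hY,X)=g(Y,(\co_X\varphi)Z)=(\co_X\Phi)(Y,Z)$, and the $\varphi h$-part appears verbatim. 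This is also the natural route (essentially the one followed in the cited reference): everything is funneled through the single operator identity for $\ell$ coming from $\co\xi$, rather than treating the four formulas independently.
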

\section{$(\kappa, \mu)$-contact pseudo-metric manifold}
 A $(\kappa ,\mu)$-nullity distribution of a contact pseudo-metric manifold  $M^{2n+1}(\varphi ,\xi,\eta, g)$  is a distribution
\begin{align}\label{61}\begin{split}
N_{p}(\kappa ,\mu)=\lbrace Z\in T_{p}M:R(X,Y)Z=&\kappa(g(Y,Z)X-g(X,Z)Y)\\
&+\mu(g(Y,Z)hX-g(X,Z)hY)\rbrace,\end{split}
\end{align}
where $(\kappa ,\mu)\in \mathbb{R}^2$.
%$\kappa$ and $\mu$ are real smooth functions on $M$.
Thus, the characteristic vector field $\xi$ belongs to the $(\kappa ,\mu)$-distribution iff
\begin{align}\label{62}
R(X,Y)\xi= \varepsilon\kappa(\eta(Y )X -\eta(X)Y ) +\varepsilon \mu(\eta(Y )hX -\eta(X)hY ).
\end{align}
 If a contact pseudo-metric manifold satisfying (\ref{62}), we call  $(\kappa, \mu)$-contact pseudo-metric manifold. The class of $(\kappa, \mu)$-contact pseudo-metric manifold contains the class of Sasakian manifolds, which we get for $\kappa=\varepsilon$ (and hence $h=0$, by (\ref{eq:sasakicurvature})).
\begin{lemma}\label{071}
Let $M^{2n+1}(\varphi,\xi,\eta,g)$ be a  $(\kappa, \mu)$-contact pseudo-metric manifold. Then, we have
\begin{gather}
\ell\varphi-\varphi\ell=2\varepsilon\mu h\varphi,\label{053}\\
 h^2=(\varepsilon\kappa-1)\varphi ^2,\quad \varepsilon\kappa \leq 1, \quad \text{ and }\quad \kappa=\varepsilon  \text{ iff }  M^{2n+1} \text{ is  Sasakian }, \label{030}\\
R(\xi ,X)Y=\kappa(g(X,Y)\xi-\varepsilon\eta(Y)X)+\mu(g(hX,Y)\xi -\varepsilon\eta(Y)hX),\label{023}\\
Q\xi=2n\kappa\xi,\text{\quad $Q$ is the Ricci operator},\label{054}\\
(\co_{X}h)Y-(\co _{Y}h)X=(1-\varepsilon\kappa)\{2\varepsilon g(X,\varphi Y)\xi+\eta(X)\varphi Y-\eta(Y)\varphi X\}\nonumber\\
\quad \quad \quad \quad\quad \quad \quad \quad\quad\quad+\varepsilon(1-\mu)\{\eta(X)\varphi hY-\eta(Y)\varphi hX\},\label{48}\\
\xi\kappa =0,\label{070}
\end{gather}
where  $X,Y\in\Gamma(TM)$.
\end{lemma}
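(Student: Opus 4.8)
The plan is to extract everything from the single nullity identity (\ref{62}) together with the structural relations of the preliminaries. First I would set $Y=\xi$ in (\ref{62}); since $\eta(\xi)=1$ and $h\xi=0$ this collapses to $\ell X=R(X,\xi)\xi=-\varepsilon\kappa\varphi^{2}X+\varepsilon\mu hX$, i.e. $\ell=-\varepsilon\kappa\varphi^{2}+\varepsilon\mu h$. From this one formula both (\ref{053}) and (\ref{030}) drop out by pure tensor algebra: using $\varphi^{3}=-\varphi$, $h\varphi=-\varphi h$ and $\varphi h\varphi=h$ one gets $\ell\varphi=\varepsilon\kappa\varphi+\varepsilon\mu h\varphi$ and $\varphi\ell=\varepsilon\kappa\varphi-\varepsilon\mu h\varphi$, whose difference is exactly (\ref{053}); feeding $\varphi\ell\varphi-\ell=2\varepsilon\kappa\varphi^{2}$ into (\ref{027}) yields $2\varepsilon\kappa\varphi^{2}=2(\varphi^{2}+h^{2})$, i.e. $h^{2}=(\varepsilon\kappa-1)\varphi^{2}$. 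On $\ker\eta$ this reads $h^{2}=(1-\varepsilon\kappa)\,\mathrm{Id}$, so the eigenvalues of the $g$-self-adjoint operator $h$ are $\pm\sqrt{1-\varepsilon\kappa}$; reality of an eigenvalue forces $\varepsilon\kappa\le 1$, while $\kappa=\varepsilon$ is equivalent to $h=0$ and hence, by (\ref{eq:sasakicurvature}), to $M$ being Sasakian. (The indefinite signature is the one place to be careful here: I would need $h$ to admit a real eigenvector so that $h^{2}=0$ genuinely forces $h=0$.)

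Next I would obtain (\ref{023}) from (\ref{62}) via the curvature symmetry $g(R(\xi,X)Y,Z)=g(R(Y,Z)\xi,X)$: substituting (\ref{62}) for $R(Y,Z)\xi$, rewriting $\eta=\varepsilon g(\xi,\cdot)$ and using self-adjointness of $h$, the right-hand side is manifestly $g\big(\kappa(g(X,Y)\xi-\varepsilon\eta(Y)X)+\mu(g(hX,Y)\xi-\varepsilon\eta(Y)hX),Z\big)$ for all $Z$, which is (\ref{023}). Contracting (\ref{023}) over a pseudo-orthonormal frame then gives (\ref{054}): the completeness relation $\sum_i\varepsilon_i g(\,\cdot\,,e_i)e_i=\mathrm{Id}$ together with $\tr h=0$ and $\eta\circ h=0$ kills the $\mu$-terms and the trace of the $\kappa$-terms collapses to $g(Q\xi,Y)=2n\kappa\varepsilon\eta(Y)$, i.e. $Q\xi=2n\kappa\xi$.

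The main work is (\ref{48}). Here I would compute $R(X,Y)\xi$ a second time, directly from $\nabla_X\xi=-\varepsilon\varphi X-\varphi hX$ (\ref{033}). Writing $\psi=\varepsilon\varphi+\varphi h$ so that $\nabla_X\xi=-\psi X$, the torsion-free identity gives $R(X,Y)\xi=-(\nabla_X\psi)Y+(\nabla_Y\psi)X$, and expanding $\nabla\psi$ through (\ref{031}) (with $AX:=\varepsilon X+hX$) one finds that all the $\xi$-coefficients cancel, leaving $R(X,Y)\xi=\varepsilon\eta(Y)AX-\varepsilon\eta(X)AY-\varphi W$, where $W:=(\nabla_Xh)Y-(\nabla_Yh)X$. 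Equating this with (\ref{62}) isolates $\varphi W=(1-\varepsilon\kappa)(\eta(Y)X-\eta(X)Y)+\varepsilon(1-\mu)(\eta(Y)hX-\eta(X)hY)$. Applying $\varphi$ once more and using $\varphi^{2}=-\mathrm{Id}+\eta\otimes\xi$ recovers $W$ up to its $\xi$-component $\eta(W)\xi$, and the delicate point is evaluating $\eta(W)$: a short computation using (\ref{033}), $\eta\circ h=0$ and $h\varphi=-\varphi h$ reduces $\eta(W)$ to $2\varepsilon\,g(\varphi hX,hY)$, which by $h^{2}=(\varepsilon\kappa-1)\varphi^{2}$ equals $2\varepsilon(1-\varepsilon\kappa)g(X,\varphi Y)$. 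This is precisely the source of the $2\varepsilon g(X,\varphi Y)\xi$ term, and assembling everything gives (\ref{48}). I expect this $\eta(W)$ computation to be the main obstacle, since the $\xi$-component is easy to miss: the form $g(\varphi\,\cdot\,,h\,\cdot\,)$ is symmetric while $g(\varphi h\,\cdot\,,h\,\cdot\,)$ is skew, and only the skew part survives.

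Finally, for (\ref{070}) I would first simplify (\ref{073}): substituting $\ell=-\varepsilon\kappa\varphi^{2}+\varepsilon\mu h$ and $h^{2}=(\varepsilon\kappa-1)\varphi^{2}$ into $\nabla_\xi h=\varphi-\varphi\ell-\varphi h^{2}$ collapses (again via $\varphi^{3}=-\varphi$) to $\nabla_\xi h=-\varepsilon\mu\varphi h$. Differentiating the identity $h^{2}=(\varepsilon\kappa-1)\varphi^{2}$ along $\xi$ and using $\nabla_\xi\varphi=0$ (\ref{072}) gives $(\nabla_\xi h)h+h(\nabla_\xi h)=\varepsilon(\xi\kappa)\varphi^{2}$; but $\nabla_\xi h=-\varepsilon\mu\varphi h$ together with $h\varphi=-\varphi h$ makes the left-hand side vanish, so $(\xi\kappa)\varphi^{2}=0$ and hence $\xi\kappa=0$.
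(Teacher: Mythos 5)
Your proposal is correct and, item by item, is essentially the paper's own proof: for \eqref{053} and \eqref{030} the paper likewise starts from $\ell X=\varepsilon\kappa(X-\eta(X)\xi)+\varepsilon\mu hX$ and feeds it into \eqref{027}; \eqref{023} comes from the same curvature symmetry $g(R(\xi,X)Y,Z)=g(R(Y,Z)\xi,X)$, and \eqref{054} from the same contraction over a $\varphi$-basis. Two deviations are worth recording. First, for \eqref{48} you recompute $R(X,Y)\xi$ directly from $\co_X\xi=-\varepsilon\varphi X-\varphi hX$ via the curvature definition, whereas the paper quotes the preliminary identity \eqref{032} and reduces it using $(\co_{X}\varphi h)Y-(\co_{Y}\varphi h)X=\varphi((\co_{X}h)Y-(\co_{Y}h)X)$; both routes land on the same pair of equations --- the paper's \eqref{056} (your formula for $\varphi W$) and \eqref{057} (your $\eta(W)=2\varepsilon(1-\varepsilon\kappa)g(X,\varphi Y)$) --- so the difference is cosmetic, your version being somewhat more self-contained. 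Second, for \eqref{070} the paper gives no argument at all (for constant $\kappa$ the statement is vacuous), while your derivation of $\co_\xi h=-\varepsilon\mu\varphi h$ from \eqref{073} followed by differentiating $h^{2}=(\varepsilon\kappa-1)\varphi^{2}$ along $\xi$ using \eqref{072} is a genuine proof, and it is exactly the argument that survives when $\kappa,\mu$ are functions, i.e.\ in the generalized setting of Section 4; this is a strict improvement. One caveat you share with the paper: the inference that $h$ symmetric and $h^{2}=(1-\varepsilon\kappa)\,\mathrm{Id}$ on $\ker\eta$ forces $\varepsilon\kappa\le 1$, and that $h^{2}=0$ forces $h=0$, is not pure linear algebra in indefinite signature --- a $g$-symmetric operator there can have non-real eigenvalues or be nonzero and nilpotent (for instance one anticommuting with $\varphi$ with $h^{2}=-\mathrm{Id}$ on a neutral-signature $\ker\eta$) --- so the paper's ``since $h$ is symmetric'' is precisely as incomplete as your sketch; you at least flag the issue explicitly, which the paper does not.
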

\begin{proof}
Using (\ref{052}), we obtain
$$\ell X=\varepsilon\kappa(X-\eta(X)\xi)+\varepsilon\mu hX,$$
for $X\in\Gamma(TM)$. Replacing $X$ by $\varphi X$ and at the same time applying $\varphi$, we obtain
\begin{align}\label{026}
\ell\varphi=\varepsilon\{\kappa\varphi+\mu h\varphi\}\quad\text{and}\quad\varphi\ell=\varepsilon\{\kappa\varphi+\mu\varphi h\}.
\end{align}
Subtracting (\ref{026}) and using (\ref{013}), we get (\ref{053}).\\
By using the relations (\ref{027}), (\ref{013}), (\ref{026}), (\ref{052}) and (\ref{001}), we deduce the first part of (\ref{030}). Now since $h$ is symmetric, from the second part of (\ref{001}), we have $\varepsilon\kappa\leq1$. Moreover, $\kappa=\varepsilon$ iff $h=0$. Using (\ref{62}) and (\ref{eq:sasakicurvature}), the proof of (\ref{030}) is completed.

Using (\ref{62}), we get (\ref{023}) and $g(R(\xi,X)Y,Z)=g(R(Y,Z)\xi,X)$.\\
For the relation of  (\ref{054}), let  $\{E_{1},\ldots, E_{n},E_{n+1}=\varphi E_{1}, \ldots,E_{2n}=\varphi E_{n},E_{2n+1}=\xi\}$  be a (local)$\varphi$-basis  of  $M$. For any index
$i =1,\ldots,2n$, $\{\xi, E_{i}\}$ spans a non-degenerate plane on the tangent space at each point where the basis is defined. Then the definition of the Ricci operator $Q$, (\ref{023}), (\ref{051}) and (\ref{052}) give
\begin{align*}
\Ric (\xi,X)=&\sum_{i=1}^{2n+1}\varepsilon_{i}g(R(E_{i},\xi)X,E_{i})\\
=&\sum_{i=1}^{2n+1}\varepsilon_{i}\{\kappa[\varepsilon\eta(X)g(E_{i},E_{i})-\varepsilon g(E_{i},X)\eta(E_{i})]\\
&+\mu[\varepsilon\eta(X)g(hE_{i},E_{i})-\varepsilon g(hE_{i},X)\eta(E_{i})]\}\\
=&\varepsilon\kappa\eta(X)\sum_{i=1}^{2n+1}\varepsilon_{i}^{2}-\varepsilon\kappa\eta(X)+\mu\varepsilon\eta(X)\tr(h)\\
=&\varepsilon\kappa\eta(X)(2n+1)-\varepsilon\kappa\eta(X)=(2n+1-1)\varepsilon\kappa\eta(X)=2n\varepsilon\kappa\eta(X),
\end{align*}
so, we have
(\ref{054}). Now with using (\ref{031}) and the symmetry of $h$, we get
\begin{align*}
(\co_{X}\varphi h)Y-(\co _{Y}\varphi h)X=\varphi((\co_{X}h)Y-(\co _{Y}h)X),
\end{align*}
for any vector fields $X,Y$ on $M$ and hence (\ref{032}) is reduced to
$$R(Y,X)\xi=\eta(X)(Y +\varepsilon hY) - \eta(Y)(X+\varepsilon hX) + \varphi((\co_{X}h)Y - (\co_{Y}h)X).$$
Comparing this equation with (\ref{62}), we have
\begin{align}\label{056}\begin{split}
\varphi((\co_{X}h)Y-(\co_{Y}h)X)=&(\varepsilon\kappa-1)(\eta(X)Y-\eta(Y)X)\\
&+\varepsilon(\mu-1)(\eta(X)hY-\eta(Y)hX).\end{split}
\end{align}
Using (\ref{033}), the symmetry of $h$ and $\co_{X}h$, we obtain
\begin{align}\label{057}
g((\co_{X}h)Y-(\co_{Y}h)X,\xi)=2(\varepsilon\kappa-1)g(Y,\varphi X).
\end{align}
Acting now by $\varphi$ on (\ref{056}) and using (\ref{057}), we get \eqref{48}.
\end{proof}
\begin{lemma}
Let $M^{2n+1}(\varphi,\xi,\eta,g)$ be a $(\kappa, \mu)$-contact pseudo-metric manifold. Then for all  $X,Y,Z\in\Gamma(TM)$, we have
\begin{align}\label{006}\begin{split}
R(X,Y)\varphi Z=&\varphi R(X,Y)Z+\{(1-\varepsilon\kappa)[\eta(X)g(\varphi Y,Z)-\eta(Y)g(\varphi X,Z)]\\
&+\varepsilon(1-\mu)[\eta(X)g(\varphi hY,Z)-\eta(Y)g(\varphi hX,Z)]\}\xi\\
&-g(Y+\varepsilon hY,Z)(\varepsilon \varphi X+\varphi hX)+g(X+\varepsilon hX,Z)(\varepsilon \varphi Y+\varphi hY)\\
&-g(\varepsilon\varphi Y+\varphi hY,Z)(X+\varepsilon hX)+g(\varepsilon\varphi X+\varphi hX,Z)(Y+\varepsilon hY)\\
&-\eta(Z)\{(1-\varepsilon\kappa)[\eta(X)\varphi Y-\eta(Y)\varphi X]+\varepsilon(1-\mu)[\eta(X)\varphi hY-\eta(Y)\varphi hX]\}.
\end{split}
\end{align}
\end{lemma}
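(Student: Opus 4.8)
The plan is to read \eqref{006} off the Ricci (commutation) identity for the structure tensor $\varphi$, regarded as a $(1,1)$-tensor field. For such a tensor the identity is
\begin{align*}
R(X,Y)\varphi Z-\varphi R(X,Y)Z=(\co^2_{X,Y}\varphi)Z-(\co^2_{Y,X}\varphi)Z,
\end{align*}
where $\co^2_{X,Y}\varphi=\co_X(\co_Y\varphi)-\co_{\co_XY}\varphi$ denotes the second covariant derivative. Since the right-hand side is tensorial in $X,Y,Z$, I would check the formula pointwise: fix $p\in M$ and extend $X,Y,Z$ to vector fields all of whose first covariant derivatives vanish at $p$. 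Then at $p$ each Hessian term collapses to $(\co^2_{X,Y}\varphi)Z=\co_X\big((\co_Y\varphi)Z\big)$, and it remains only to differentiate the closed form \eqref{031}.

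First I would apply $\co_X$ to $(\co_Y\varphi)Z=\varepsilon g(\varepsilon Y+hY,Z)\xi-\eta(Z)(\varepsilon Y+hY)$. At $p$ the derivatives of $Y$ and $Z$ drop out, so only $h$, $\xi$ and $\eta$ contribute; invoking \eqref{033} for $\co_X\xi$ and $\eta(\cdot)=\varepsilon g(\xi,\cdot)$ to differentiate $\eta(Z)$, this gives
\begin{align*}
(\co^2_{X,Y}\varphi)Z=&\ \varepsilon g((\co_Xh)Y,Z)\xi-\eta(Z)(\co_Xh)Y\\
&-\varepsilon g(\varepsilon Y+hY,Z)(\varepsilon\varphi X+\varphi hX)+\varepsilon g(\varepsilon\varphi X+\varphi hX,Z)(\varepsilon Y+hY).
\end{align*}
Subtracting the same expression with $X$ and $Y$ interchanged, the two algebraic lines reorganise into exactly the four middle terms of \eqref{006}; here the identity $\varepsilon X+hX=\varepsilon(X+\varepsilon hX)$ redistributes the surplus $\varepsilon$-factors onto the forms $X+\varepsilon hX$ and $\varepsilon\varphi X+\varphi hX$ occurring there. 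The two derivative lines meanwhile collect into $\varepsilon g(W,Z)\xi-\eta(Z)W$, where $W:=(\co_Xh)Y-(\co_Yh)X$.

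The decisive step is to substitute the previously established identity \eqref{48} for $W$. Using $g(\xi,Z)=\varepsilon\eta(Z)$, the $(1-\varepsilon\kappa)$- and $(1-\mu)$-parts of \eqref{48} separate into the $\xi$-valued coefficient on the first two lines of \eqref{006} and the $\eta(Z)$-valued expression on its last line; furthermore the summand $2\varepsilon g(X,\varphi Y)\xi$ of \eqref{48} contributes two terms proportional to $g(X,\varphi Y)\eta(Z)\xi$, one from $\varepsilon g(W,Z)\xi$ and one from $-\eta(Z)W$, which appear with opposite signs and cancel. I expect the main obstacle to be precisely this $\varepsilon$-bookkeeping: because $\varepsilon^2=1$ repeatedly collapses factors, one must track each $\varepsilon$ through every metric contraction to pin down the coefficients of the $\xi$- and $\eta(Z)$-terms, the four algebraic middle terms being comparatively routine.
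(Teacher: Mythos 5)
Your proposal is correct and takes essentially the same route as the paper: the paper likewise applies the Ricci commutation identity to $\varphi$ at a point where $\nabla X=\nabla Y=\nabla Z=0$, differentiates \eqref{031} by means of \eqref{033} to reach exactly your intermediate expression (its equation \eqref{058}, with the $\varepsilon$-factors redistributed as you describe), and then substitutes \eqref{48}. The $\varepsilon$-bookkeeping you single out, including the cancellation of the two $g(X,\varphi Y)\eta(Z)\xi$ terms coming from \eqref{48}, is precisely the final step of the paper's own argument.
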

\begin{proof}
Assume that $p\in M$ and $X, Y, Z$ local vector fields on a neighborhood of $p$, such that $$(\co X)_{p}=(\co Y)_{p}=(\co Z)_{p}=0.$$ The Ricci identity for $\varphi$:
\begin{align}
R( X, Y)\varphi Z-\varphi R( X, Y)Z=(\co_{X}\co_{Y}\varphi)Z-\co_{Y}(\co_{X}\varphi)Z-(\co_{[X,Y]}\varphi)Z,
\end{align}
at the point $p$, takes the form
\begin{align}\label{034}
R(X,Y)\varphi Z-\varphi R(X,Y)Z=\co_{X}(\co_{Y}\varphi)Z-\co_{Y}(\co_{X}\varphi)Z.
\end{align}
On the other hand, combining (\ref{033}) and (\ref{031}), we have at $p$
\begin{align}\label{058}\begin{split}
\co_{X}(\co_{Y}\varphi)Z-\co_{Y}(\co_{X}\varphi)Z=&\varepsilon g((\co_{X}h)Y-(\co_{Y}h)X,Z)\xi-g(Y+\varepsilon hY,Z)(\varepsilon\varphi X+\varphi hX)\\
&+\varepsilon g(\varepsilon\varphi X+\varphi hX,Z)(\varepsilon Y+hY)+g(X+\varepsilon hX,Z)(\varepsilon\varphi Y+\varphi hY)\\
&-\varepsilon g(Z,\varepsilon \varphi Y+\varphi hY)(\varepsilon X+hX)-\eta(Z)((\co_{X}h)Y-(\co_{Y}h)X)
\end{split}
\end{align}
Now equation (\ref{006}) is a straightforward combination of the (\ref{058}), (\ref{034}) and (\ref{48}).
\end{proof}
\begin{theorem}\label{037}
Let $M^{2n+1}(\varphi,\xi,\eta,g)$ be a $(\kappa, \mu)$-contact pseudo-metric manifold.  Then $\varepsilon\kappa\leq 1$. If $\kappa= \varepsilon$, then $h=0$ and $M^{2n+1}$ is a  Sasakian-space-form and if $\varepsilon\kappa<1$, then $M^{2n+1}$ admits three mutually orthogonal and integrable distributions $\di(0)=\Span\{\xi\}, \di(\lambda)$ and $\di(-\lambda)$, defined by the eigenspaces of $h$, where $\lambda=\sqrt{1-\varepsilon\kappa}$.
\end{theorem}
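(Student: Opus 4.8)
The statement has three layers, and my plan is to peel them off in increasing order of difficulty, leaning on the identities already collected in Lemma \ref{071}. The inequality $\varepsilon\kappa\le 1$ together with the equivalence $\kappa=\varepsilon\iff h=0\iff M$ Sasakian is exactly what \eqref{030} records, so the first two assertions are essentially immediate: if $\kappa=\varepsilon$ then $h=0$, and \eqref{62} collapses to the Sasakian curvature identity \eqref{eq:sasakicurvature}, so $M$ is Sasakian; the space-form (constant $\varphi$-sectional curvature) conclusion is then the curvature determination established in this section. Hence the substantive case is $\varepsilon\kappa<1$.

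Assume $\varepsilon\kappa<1$ and put $\lambda=\sqrt{1-\varepsilon\kappa}>0$, a constant since $\kappa$ is. Restricting \eqref{030} to $\ker\eta$, where $\varphi^{2}=-\mathrm{id}$ by \eqref{001}, yields $h^{2}=\lambda^{2}\,\mathrm{id}$ on $\ker\eta$. Thus the minimal polynomial of $h|_{\ker\eta}$ divides $(t-\lambda)(t+\lambda)$, which has simple roots, so $h$ is diagonalizable on $\ker\eta$ with eigenvalues $\pm\lambda$; combined with $h\xi=0$ from \eqref{052}, this splits $TM$ into the eigenspaces $\di(0)=\Span\{\xi\}$, $\di(\lambda)$ and $\di(-\lambda)$. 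Mutual orthogonality is the standard self-adjointness argument: for eigenvectors with distinct real eigenvalues $\alpha\ne\beta$, $\alpha g(X,Y)=g(hX,Y)=g(X,hY)=\beta g(X,Y)$ forces $g(X,Y)=0$. Because $g$ is nondegenerate on $TM$ and on $\Span\{\xi\}$ (since $g(\xi,\xi)=\varepsilon$), this orthogonal splitting makes $g$ nondegenerate on each of $\di(\lambda),\di(-\lambda)$; and since $h\varphi=-\varphi h$ by \eqref{013}, $\varphi$ interchanges $\di(\lambda)$ and $\di(-\lambda)$, so both have dimension $n$.

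For integrability, $\di(0)$ has rank one and is integrable by Frobenius. For $\di(\lambda)$ take $X,Y\in\di(\lambda)$. First, $\varphi Y\in\di(-\lambda)$ is orthogonal to $X$, so $d\eta(X,Y)=\Phi(X,Y)=g(X,\varphi Y)=0$; as $X,Y\in\ker\eta$ this gives $\eta([X,Y])=0$, i.e. $[X,Y]\in\ker\eta=\di(\lambda)\oplus\di(-\lambda)$. Next, since $\lambda$ is constant, $(\co_{X}h)Y=\lambda\co_{X}Y-h\co_{X}Y$, whence $(\co_{X}h)Y-(\co_{Y}h)X=\lambda[X,Y]-h[X,Y]$; but $\eta(X)=\eta(Y)=0$ and $g(X,\varphi Y)=0$ make the entire right-hand side of \eqref{48} vanish, so $h[X,Y]=\lambda[X,Y]$. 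Writing $[X,Y]=V_{+}+V_{-}$ with $V_{\pm}\in\di(\pm\lambda)$, this reads $\lambda V_{+}-\lambda V_{-}=\lambda V_{+}+\lambda V_{-}$, and $\lambda\ne 0$ forces $V_{-}=0$, i.e. $[X,Y]\in\di(\lambda)$. The computation for $\di(-\lambda)$ is identical, with the roles of the eigenspaces exchanged.

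The step I expect to be the crux is the diagonalizability/nondegeneracy package, which is genuinely delicate in the pseudo-Riemannian setting: a $g$-self-adjoint operator need not be diagonalizable when $g$ is indefinite, and eigenspaces need not carry a nondegenerate metric. What rescues the argument is the exact relation $h^{2}=\lambda^{2}\,\mathrm{id}$ on $\ker\eta$, which pins the minimal polynomial to the simple-root product $(t-\lambda)(t+\lambda)$ and thereby forces diagonalizability; the nondegeneracy of $g$ on each factor then comes for free from the orthogonality of distinct eigenspaces. Once $h$ is diagonalized, the integrability reduces to the single clean cancellation in \eqref{48}, so no further hard analysis is needed.
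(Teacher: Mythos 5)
Your proof is correct, and at the two decisive steps it takes a genuinely different route from the paper's. For the eigenspace decomposition, the paper simply asserts, from the symmetry of $h$ together with \eqref{052}, that $h$ restricted to the contact distribution has eigenvalues $\pm\lambda$; your minimal-polynomial argument (restricting \eqref{030} to $\ker\eta$ to get $h^{2}=\lambda^{2}\,\mathrm{id}$, whence the minimal polynomial divides $(t-\lambda)(t+\lambda)$ and $h$ is diagonalizable) is the more careful route, and it is genuinely needed here: in indefinite signature a $g$-self-adjoint operator need not be diagonalizable, so you have in fact filled in a point the paper glosses over. For integrability, the paper expands $0=R(X,Y)\xi=\co_{X}\co_{Y}\xi-\co_{Y}\co_{X}\xi-\co_{[X,Y]}\xi$ using $\co_{X}\xi=-(\varepsilon\pm\lambda)\varphi X$, kills the $(\co\varphi)$-difference with \eqref{031}, and reads off $\varphi h([X,Y])=\mp\lambda\varphi([X,Y])$; you instead feed $X,Y\in\di(\pm\lambda)$ into the ready-made identity \eqref{48} and use constancy of $\lambda$ to rewrite $(\co_{X}h)Y-(\co_{Y}h)X$ as $\pm\lambda[X,Y]-h[X,Y]$. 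Both arguments rest on the same structure equations (\eqref{48} is itself extracted from the $(\kappa,\mu)$-condition in Lemma \ref{071}, proved before this theorem, so there is no circularity), but yours is shorter and avoids redoing the curvature computation; likewise your derivation of $\eta([X,Y])=0$ directly from $d\eta=\Phi$ and orthogonality is cleaner than the paper's comparison of $g(\co_{X}\xi,Y)$ with $g(\co_{Y}\xi,X)$.

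One caveat, which applies to the paper as much as to you: the clause that $M$ is a \emph{Sasakian space form} when $\kappa=\varepsilon$ is not actually established by either argument. The paper's proof is silent on it, and your appeal to ``the curvature determination established in this section'' does not cover it, because Theorem \ref{087} assumes that the $\varphi$-sectional curvature at each point is independent of the $\varphi$-section --- an extra hypothesis not implied by Sasakian-ness. What both proofs genuinely yield in the case $\kappa=\varepsilon$ is only that $h=0$ and $M$ is Sasakian, via \eqref{030} and \eqref{eq:sasakicurvature}.
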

\begin{proof}
By $\xi\in N(\kappa,\mu)$, we can verify $\Ric(\xi,\xi)=2n\varepsilon\kappa$. Then, (\ref{035}) implies
$\varepsilon\kappa\leq1$. Now, we suppose $\varepsilon\kappa<1$. Then since $h$ is symmetric, the relations (\ref{052}) and (\ref{031}) imply that the restriction $h\vert\di$ of $h$ to the contact distribution $\di$ has eigenvalues $\lambda=\sqrt{1-\varepsilon\kappa}$ and $-\lambda$. By $\di(\lambda)$ and  $\di(-\lambda)$, we denote the distributions defined by the eigenspaces of $h$ corresponding to $\lambda$ and $-\lambda$, respectively. By  $\di(0)$, we denote the distribution defined by $\xi$. Then these three distributions are mutually orthogonal. Let  $X\in\di(\lambda)$, Then $hX=\lambda X$ and the relation of (\ref{013}) imply
$h(\varphi X)=-\lambda(\varphi X)$. Hence, we have $\varphi X\in\di(-\lambda)$. This means that the dimension of  $\di(\lambda)$ and $\di(-\lambda)$ are equal to $n$. We prove that  $\di(\lambda)$ ( $\di(-\lambda)$, resp.) is integrable. Let $X,Y\in\di(\lambda)$ ($\di(-\lambda)$, resp.). Then
$$\co_{X}\xi=-\varepsilon\varphi X-\varphi hX=-(\varepsilon\pm\lambda)\varphi X,$$
and $\co_{Y}\xi=-(\varepsilon\pm\lambda)\varphi Y$. So, $g(\co_{X}\xi,Y)=g(\co_{Y}\xi,X)$ holds. Thus, $d\eta(X,Y)=0$ and $\eta([X,Y])=0$ follow. $X,Y\in\di(\lambda)$ and $\xi\in N(\kappa,\mu)$ imply $R(X,Y)\xi=0$. On the other hand,
\begin{align}\label{059}\begin{split}
0&=\co_{X}\co_{Y}\xi-\co_{Y}\co_{X}\xi-\co_{[X,Y]}\xi\\
&=-(\varepsilon\pm\lambda)\co_{X}(\varphi Y)+(\varepsilon\pm\lambda)\co_{Y}(\varphi X)+\varepsilon\varphi([X,Y])+\varphi h([X,Y])\\
&=-(\varepsilon\pm\lambda)\{(\co_{X}\varphi)Y-(\co_{Y}\varphi)X\}\mp\lambda\varphi([X,Y])+\varphi h([X,Y]).
\end{split}
\end{align}
By (\ref{031}), the first term of the last line (\ref{059})
vanishes. And so, we obtain
$$\varphi h([X,Y])=\mp\lambda\varphi([X,Y]),$$
which together with $\eta([X,Y])=0$ implies $[X,Y]\in\di(\lambda)$ ($\mathcal{D}(-\lambda)$, resp.).
\end{proof}
%We now have the following proposition:
\begin{pro}\label{47}
Let $M^{2n+1}(\varphi,\xi,\eta,g)$ be a $(\kappa, \mu)$-contact pseudo-metric manifold with $\varepsilon\kappa <1$,
then
\begin{itemize}
  \item If $X,Y\in \di(\lambda)$ (resp. $\di(-\lambda)$),  then $\co_{X}Y\in \di(\lambda)$ (resp. $\di(-\lambda)$).
  \item If $X\in \di(\lambda), Y\in \di(-\lambda)$,  then $\co_{X}Y (resp. \co_{Y}X )$  has no component in $\di(\lambda)$ (resp. $\di(-\lambda)$).
\end{itemize}
\end{pro}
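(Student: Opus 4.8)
The plan is to work with the $g$-orthogonal splitting $TM=\di(0)\oplus\di(\lambda)\oplus\di(-\lambda)$ furnished by Theorem~\ref{037}. Since $h$ is self-adjoint with the three distinct real eigenvalues $0,\lambda,-\lambda$ and $\lambda=\sqrt{1-\varepsilon\kappa}\neq0$ (as $\varepsilon\kappa<1$), these eigendistributions are mutually orthogonal and each is non-degenerate, so a vector is determined by its three $g$-orthogonal components, and ``having no component in $\di(\lambda)$'' is tested by pairing with $\di(\lambda)$ under $g$. I would first record, from \eqref{033}, the eigenvector formulas $\co_X\xi=-(\varepsilon+\lambda)\varphi X$ for $X\in\di(\lambda)$ and $\co_Y\xi=(\lambda-\varepsilon)\varphi Y$ for $Y\in\di(-\lambda)$, together with $\varphi\di(\lambda)=\di(-\lambda)$ and $\varphi\di(-\lambda)=\di(\lambda)$ (immediate from \eqref{013}). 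Because $\varphi$ interchanges the two eigendistributions, the $\di(-\lambda)$ versions of every assertion follow from the $\di(\lambda)$ versions under $\lambda\mapsto-\lambda$, so I would prove only the latter.

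The crux is the second item, and the key idea is to feed the nullity condition back through the Levi-Civita connection. For $X\in\di(\lambda)$ and $Y\in\di(-\lambda)$ all the $\eta$-inputs in \eqref{62} vanish, hence $R(X,Y)\xi=0$. I would then expand $R(X,Y)\xi=\co_X\co_Y\xi-\co_Y\co_X\xi-\co_{[X,Y]}\xi$ using the two eigenvector formulas above. The decisive simplification is that the cross terms $(\co_X\varphi)Y$ and $(\co_Y\varphi)X$ both vanish: by \eqref{031} they are proportional to $g(\varepsilon X+hX,Y)=(\varepsilon+\lambda)g(X,Y)=0$ and to $\eta(\cdot)=0$. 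After this cancellation the identity collapses to
\[
0=R(X,Y)\xi=\varphi\big[\lambda(\co_XY+\co_YX)+h(\co_XY-\co_YX)\big].
\]
Since $\ker\varphi=\Span\{\xi\}$ (as $\varphi$ has rank $2n$), the bracket lies in $\Span\{\xi\}$; projecting it onto $\di(\lambda)$ and onto $\di(-\lambda)$ and using that $h$ acts as $\pm\lambda$ on $\di(\pm\lambda)$ yields that $2\lambda$ times the $\di(\lambda)$-component of $\co_XY$, and $2\lambda$ times the $\di(-\lambda)$-component of $\co_YX$, both vanish. As $\lambda\neq0$, this is exactly the assertion that $\co_XY$ has no $\di(\lambda)$-component and $\co_YX$ has no $\di(-\lambda)$-component.

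With the second item in hand, the first item is short. For $X,Y\in\di(\lambda)$ the $\di(0)$-component of $\co_XY$ vanishes because $g(\co_XY,\xi)=-g(Y,\co_X\xi)=(\varepsilon+\lambda)g(Y,\varphi X)=0$, using $\varphi X\in\di(-\lambda)\perp Y$. For the $\di(-\lambda)$-component I would take $Z\in\di(-\lambda)$ and use metric compatibility, $g(\co_XY,Z)=-g(Y,\co_XZ)$ (valid for sections, since $g(Y,Z)\equiv0$); by the second item $\co_XZ$ has no $\di(\lambda)$-component, so it lies in $\di(-\lambda)\oplus\di(0)$, which is $g$-orthogonal to $Y\in\di(\lambda)$. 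Hence $g(\co_XY,Z)=0$ for all such $Z$, and therefore $\co_XY\in\di(\lambda)$.

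I expect the only real obstacle to be the bookkeeping in the curvature expansion of the second item --- in particular, securing the cancellation of the $(\co\varphi)$-terms and then correctly reading the $\di(\pm\lambda)$-components out of the single relation $\varphi[\,\cdots]=0$; this is precisely where the eigenvalue structure $hX=\lambda X$, $hY=-\lambda Y$ is indispensable. The accompanying pseudo-metric subtlety, namely that the eigendistributions are genuinely non-degenerate so that these component arguments are meaningful, is settled once at the setup stage by the distinctness of the eigenvalues of the self-adjoint operator $h$.
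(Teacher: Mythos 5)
Your proof is correct. Note first that the paper offers no proof of Proposition \ref{47} at all --- it is stated bare, as the pseudo-metric analogue of a lemma of Blair--Koufogiorgos--Papantoniou \cite{Blair:Contactmetricmanifoldssatisfyingnullitycondition} --- so yours is not a variant of the paper's argument but a replacement for a missing one. What you do is extend the paper's own technique from the proof of Theorem \ref{037}: there, integrability of $\di(\pm\lambda)$ is obtained by expanding $0=R(X,Y)\xi$ through \eqref{033}, with the $(\co\varphi)$-terms killed by \eqref{031}, for $X,Y$ in the \emph{same} eigendistribution; you run the identical expansion in the \emph{mixed} case $X\in\di(\lambda)$, $Y\in\di(-\lambda)$, where again $(\co_X\varphi)Y=(\co_Y\varphi)X=0$ because $g(\varepsilon X+hX,Y)=(\varepsilon+\lambda)g(X,Y)=0$ (orthogonality of distinct eigenspaces) and $\eta(X)=\eta(Y)=0$. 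The collapse to
\begin{equation*}
0=R(X,Y)\xi=\varphi\bigl[\lambda(\co_XY+\co_YX)+h(\co_XY-\co_YX)\bigr]
\end{equation*}
checks out, and since the bracketed field has $\di(\lambda)$-component $2\lambda(\co_XY)_{\lambda}$ and $\di(-\lambda)$-component $2\lambda(\co_YX)_{-\lambda}$, its membership in $\ker\varphi=\Span\{\xi\}$ forces both to vanish ($\lambda\neq0$), which is exactly the second item. The deduction of the first item from the second, via $g(\co_XY,\xi)=-g(Y,\co_X\xi)$, $g(\co_XY,Z)=-g(Y,\co_XZ)$ and the non-degeneracy of each eigendistribution (correctly justified from the orthogonal splitting), is also sound. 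Two merits worth naming: your ordering (second item first) avoids any circularity with Lemma \ref{042}, whose proof in the paper \emph{uses} Proposition \ref{47}, so the formula \eqref{041} for $\co h$ was rightly off-limits to you; and pulling the factors $\varepsilon\pm\lambda$ through $\co_X$ is legitimate precisely because $\kappa$, hence $\lambda$, is constant in the setting of Section 3 (in the generalized setting of Section 4 this step would generate extra derivative terms). Compared with the more traditional route through the Codazzi-type identity \eqref{48} and bookkeeping for $\co h$, your curvature expansion is shorter and reuses only machinery already displayed in the paper.
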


 \begin{lemma}\label{042}
Let $M^{2n+1}(\varphi,\xi,\eta,g)$ be a $(\kappa, \mu)$-contact pseudo-metric manifold. Then for any vector fields $X,Y$ on $M$, we have
\begin{align}\label{041}\begin{split}
(\co_{X}h)Y=&\{(\varepsilon-\kappa)g(X,\varphi Y)+g(X,h\varphi Y)\}\xi\\
&+\eta(Y)[h(\varepsilon\varphi X+\varphi hX)]-\varepsilon\mu\eta(X)\varphi hY.
\end{split}
\end{align}
\end{lemma}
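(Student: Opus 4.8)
The plan is to pin down $(\co_X h)Y$ by computing separately its component along $\xi$ and its component in the contact distribution $\ker\eta$, exploiting that $(X,Y)\mapsto(\co_X h)Y$ is tensorial; it then suffices to treat $X$ and $Y$ each equal to $\xi$ or lying in $\ker\eta$, and the three resulting contributions assemble into \eqref{041}.

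First I would isolate the $\xi$-component. Since $\eta=\varepsilon g(\xi,\cdot)$ and $\co g=0$ we have $(\co_X\eta)(Z)=\varepsilon g(\co_X\xi,Z)$, and because $\eta\circ h=0$,
\begin{align*}
\eta((\co_X h)Y)=\eta(\co_X(hY))=-(\co_X\eta)(hY)=-\varepsilon g(\co_X\xi,hY).
\end{align*}
Substituting $\co_X\xi=-\varepsilon\varphi X-\varphi hX$ from \eqref{033}, using that $\varphi$ is skew and $h$ self-adjoint with $h\varphi=-\varphi h$, and finally $h^2=(\varepsilon\kappa-1)\varphi^2$ from \eqref{030} together with $\varphi^2X=-X+\eta(X)\xi$, a short reduction gives $\eta((\co_X h)Y)=(\varepsilon-\kappa)g(X,\varphi Y)+g(X,h\varphi Y)$ for \emph{all} $X,Y$. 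Hence the $\xi$-component $\eta((\co_X h)Y)\,\xi$ of $(\co_X h)Y$ equals the first bracket of \eqref{041}; in particular it already vanishes when $X=\xi$ or $Y=\xi$.

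Next I would compute the component in $\ker\eta$, starting from two boundary cases. Differentiating $h\xi=0$ gives $(\co_X h)\xi=-h\co_X\xi=h(\varepsilon\varphi X+\varphi hX)$ by \eqref{033}, which is the $\eta(Y)$-term; and for $X=\xi$, formula \eqref{073} combined with \eqref{026} and $\varphi h^2=(1-\varepsilon\kappa)\varphi$ (from \eqref{030}) collapses $\co_\xi h=\varphi-\varphi\ell-\varphi h^2$ to $-\varepsilon\mu\varphi h$, the $\eta(X)$-term. It then remains to show that for $X,Y\in\ker\eta$ the vector $(\co_X h)Y$ has no $\ker\eta$-component, i.e. is a multiple of $\xi$; by the previous paragraph this multiple is forced to be $(\varepsilon-\kappa)g(X,\varphi Y)+g(X,h\varphi Y)$, exactly matching \eqref{041} once the $\eta(X)$- and $\eta(Y)$-terms are switched off.

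For this last point, which is the heart of the matter, I would pass to the eigenspaces of $h$. As $\kappa$ is constant, $\lambda=\sqrt{1-\varepsilon\kappa}$ is constant, and by Theorem \ref{037} the contact distribution splits as $\di(\lambda)\oplus\di(-\lambda)$; by tensoriality it suffices to take $X,Y$ eigenvectors. If $X,Y\in\di(\lambda)$ (or both in $\di(-\lambda)$), then $hY=\pm\lambda Y$ with $\lambda$ constant gives $(\co_X h)Y=\pm\lambda\,\co_X Y-h\co_X Y$, while Proposition \ref{47} yields $\co_X Y\in\di(\pm\lambda)$, so $h\co_X Y=\pm\lambda\,\co_X Y$ and $(\co_X h)Y=0$. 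If instead $X\in\di(\lambda)$ and $Y\in\di(-\lambda)$, then $(\co_X h)Y=-\lambda\,\co_X Y-h\co_X Y$, and Proposition \ref{47} ensures $\co_X Y$ has no $\di(\lambda)$-component, so $\co_X Y=B+c\,\xi$ with $B\in\di(-\lambda)$; since $hB=-\lambda B$ and $h\xi=0$, the $\di(-\lambda)$-parts cancel and $(\co_X h)Y=-\lambda c\,\xi$ is indeed a multiple of $\xi$. The main obstacle is precisely this case analysis: it requires the connection behaviour of the eigendistributions from Proposition \ref{47}, careful bookkeeping of the three mutually orthogonal pieces, and constancy of $\lambda$ so that no derivative-of-eigenvalue terms appear; a secondary point of care is the factor $g(\xi,\xi)=\varepsilon$ when converting between $\eta((\co_X h)Y)$ and the coefficient of $\xi$.
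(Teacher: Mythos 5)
Your proof is correct, and at its core it follows the same route as the paper: the eigendistribution splitting from Theorem \ref{037} together with Proposition \ref{47}, applied case by case. The organization, however, is genuinely different and arguably cleaner. The paper computes the full value of $(\co_X h)Y$ in every case --- for $X,Y$ in the same eigendistribution it gets $0$, while for the mixed case $X\in\di(\lambda)$, $Y\in\di(-\lambda)$ it expands $h\co_X Y$ in a local $\varphi$-basis to obtain $(\co_X h)Y=\lambda(1+\varepsilon\lambda)g(X,\varphi Y)\xi$ --- and then reassembles the general formula through the decomposition identities (\ref{063})--(\ref{065}). You instead split $(\co_X h)Y$ into its $\xi$-component and its $\ker\eta$-component: the $\xi$-component is obtained once, for arbitrary $X,Y$, from the scalar identity $\eta((\co_X h)Y)=-\varepsilon g(\co_X\xi,hY)$ (using $\eta\circ h=0$, metric compatibility, (\ref{033}), (\ref{013}) and (\ref{030})), so the eigendistribution analysis is only needed to show that the $\ker\eta$-component vanishes when $X,Y\in\ker\eta$; that follows from Proposition \ref{47} with no basis computation, which is what your route buys over the paper's. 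Your boundary cases also match: $(\co_X h)\xi=h(\varepsilon\varphi X+\varphi hX)$ is immediate from $h\xi=0$ and (\ref{033}), and your derivation of $\co_\xi h=-\varepsilon\mu\varphi h$ from (\ref{073}), (\ref{026}) and (\ref{030}) agrees with the paper's $\co_\xi h=\varepsilon\mu h\varphi$ (deduced there from (\ref{48})) because $h\varphi=-\varphi h$; the final assembly by bilinearity is sound, since both sides of (\ref{041}) are tensorial in $X$ and $Y$. One small point to add for completeness: your splitting argument presupposes $\varepsilon\kappa<1$, so the Sasakian case $\kappa=\varepsilon$ needs a separate (trivial) remark --- there $h=0$ and both sides of (\ref{041}) vanish identically, which is exactly how the paper closes its proof.
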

\begin{proof}
Let $\varepsilon\kappa< 1$ and $X,Y\in\di(\lambda)$(resp., $\di(-\lambda)$). Then from Proposition \ref{47}, we have $\co_{X}Y\in\di(\lambda)$ (resp., $\di(-\lambda)$). Then one easily proves that
\begin{align}\label{038}
(\co_{X}h)Y=0.
\end{align}
Suppose now that $X\in\di(\lambda)$, $Y\in\di(-\lambda)$ and $\{E_{i},\varphi E_{i},\xi\}$ be a (local) $\varphi$-basis of vector fields on $M$ with $E_{i}\in\di(\lambda)$ and so $\varphi E_{i}\in\di(-\lambda)$. For any index $i =1,\cdots,2n$, $\{\xi, E_{i}\}$ spans a non-degenerate plane on the tangent space at each point where the basis is defined. Then using Proposition \ref{47} and  the relations (\ref{052}), (\ref{001}) and (\ref{033}), we calculate
\begin{align*}
h\co_{X}Y&=h\{\sum_{i=1}^{n}\varepsilon_{i}g(\co_{X}Y,\varphi E_{i})\varphi E_{i}+\varepsilon g(\co_{X}Y,\xi)\xi\}\\
&=\sum_{i=1}^{n}\varepsilon_{i}g(\co_{X}Y,\varphi E_{i})h\varphi E_{i}\\
&=\lambda\varphi\sum_{i=1}^{n}\varepsilon_{i}g(\varphi\co_{X}Y,E_{i})E_{i}\\
&=\lambda\varphi^{2}(\co_{X}Y)\\
&=\lambda(-\co_{X}Y+\varepsilon g(\co_{X}Y,\xi)\xi)\\
&=\lambda(-\co_{X}Y-\varepsilon g(Y,\co_{X}\xi)\xi)\\
&=\lambda(-\co_{X}Y-\varepsilon g(Y,-\varepsilon\varphi X-\varphi hX)\xi)\\
&=\lambda(-\co_{X}Y+g(Y,\varphi X+\varepsilon\varphi hX)\xi)\\
&=\lambda(-\co_{X}Y-g(\varphi Y,X+\varepsilon hX)\xi)\\
&=\co_{X}hY-\lambda(1+\varepsilon\lambda)g(X,\varphi Y)\xi,
\end{align*}
and so
\begin{align}\label{039}
(\co_{X}h)Y=\lambda(1+\varepsilon\lambda)g(X,\varphi Y)\xi,
\end{align}
Similarly, we obtain
\begin{align}\label{040}
(\co_{Y}h)X=\lambda(\varepsilon\lambda -1)g(Y,\varphi X)\xi,
\end{align}
Suppose now that $X,Y$ are arbitrary vector fields on $M$ and write
$$X=X_{\lambda}+X_{-\lambda}+\eta(X)\xi,$$
and
$$Y=Y_{\lambda}+Y_{-\lambda}+\eta(Y)\xi,$$
where $X_{\lambda}$ (resp., $X_{-\lambda}$) is the component of $X$ in $\di(\lambda)$ (resp., $\di(-\lambda)$). Then using (\ref{038}), (\ref{039}), (\ref{040}) and $\co_{\xi}h=\varepsilon\mu h\varphi$, which follows from (\ref{48}), we get by a direct computation
\begin{align}\label{063}
\begin{split}
(\co_{X}h)Y=&\varepsilon\lambda^{2}\{g(X_{\lambda},\varphi Y_{-\lambda})+g(X_{-\lambda},\varphi Y_{\lambda})\}\xi\\
&+\lambda\{g(X_{\lambda},\varphi Y_{-\lambda})-g(X_{-\lambda},\varphi Y_{\lambda})\}\xi\\
&+\eta(Y)h(\varepsilon\varphi X+\varphi hX)-\varepsilon\mu\eta(X)\varphi hY.
\end{split}
\end{align}
On the other hand, we easily find that
\begin{gather}
g(hX,\varphi Y)=\lambda\{g(X_{\lambda},\varphi Y_{-\lambda})-g(X_{-\lambda},\varphi Y_{\lambda})\},\label{064}\\
g(hX,h\varphi Y)=\lambda^{2}\{g(X_{-\lambda},\varphi Y_{\lambda})+g(X_{\lambda},\varphi Y_{-\lambda})\}.\label{065}
\end{gather}
The relations (\ref{064}) and (\ref{065}) with (\ref{063}), give the required equation (\ref{041}). Note that for $\kappa=\varepsilon$ (and so $h=0$), (\ref{041}) is valid identically and the proof is completed.
\end{proof}
\begin{lemma}
Let $M^{2n+1}(\varphi,\xi,\eta,g)$ be a $(\kappa, \mu)$-contact pseudo-metric manifold. Then for any vector fields $X,Y,Z$ on $M$. We have
\begin{align}\label{045}\begin{split}
R(X,Y)hZ-hR(X,Y)Z=&\{\kappa[\eta(X)g(hY,Z)-\eta(Y)g(hX,Z)]\\
&+\mu(\varepsilon\kappa-1)[\eta(Y)g(X,Z)-\eta(X)g(Y,Z)]\}\xi\\
&+\kappa\{g(Y,\varphi Z)\varphi hX-g(X,\varphi Z)\varphi hY+g(Z,\varphi hY)\varphi X\\
&-g(Z,\varphi hX)\varphi Y+\varepsilon\eta(Z)[\eta(X)hY-\eta(Y)hX]\}\\
&-\mu\{\eta(Y)[(\varepsilon-\kappa)\eta(Z)X+\mu\eta(X)hZ]\\
&-\eta(X)[(\varepsilon-\kappa)\eta(Z)Y+\mu\eta(Y)hZ]+2\varepsilon g(X,\varphi Y)\varphi hZ\}.
\end{split}
\end{align}
\end{lemma}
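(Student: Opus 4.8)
The plan is to derive \eqref{045} from the first-order formula \eqref{041} for $(\co_X h)Y$ by differentiating once more and applying the Ricci identity to the $(1,1)$-tensor field $h$. For any $(1,1)$-tensor $h$ the commutator of second covariant derivatives satisfies
\begin{align*}
R(X,Y)hZ-hR(X,Y)Z=(\co_X\co_Y h)Z-(\co_Y\co_X h)Z-(\co_{[X,Y]}h)Z,
\end{align*}
so it suffices to evaluate the right-hand side. As in the preceding proofs, I would fix $p\in M$ and choose local vector fields $X,Y,Z$ with $(\co X)_p=(\co Y)_p=(\co Z)_p=0$; then $[X,Y]_p=0$ and at $p$ the identity collapses to
\begin{align*}
R(X,Y)hZ-hR(X,Y)Z=\co_X\bigl((\co_Y h)Z\bigr)-\co_Y\bigl((\co_X h)Z\bigr),
\end{align*}
where $(\co_Y h)Z$ is the globally defined vector field prescribed by \eqref{041}.

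Next I would differentiate \eqref{041} in the direction $X$ and evaluate at $p$. Since $g$ is parallel and $X,Y,Z$ have vanishing covariant derivative at $p$, only the structural tensors $\xi,\varphi,\eta,h$ get differentiated, and for these I would substitute $\co_X\xi=-\varepsilon\varphi X-\varphi hX$ from \eqref{033}, $(\co_X\varphi)Z=\varepsilon g(\varepsilon X+hX,Z)\xi-\eta(Z)(\varepsilon X+hX)$ from \eqref{031}, the relation $(\co_X\eta)(W)=\varepsilon g(\co_X\xi,W)$ coming from $\eta=\varepsilon g(\xi,\cdot)$, and $(\co_X h)$ given once more by \eqref{041}. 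This expresses $\co_X\bigl((\co_Y h)Z\bigr)$ as an explicit algebraic combination of $X,Y,Z$ with the structure tensors; subtracting the same combination with $X$ and $Y$ interchanged produces the left-hand side of \eqref{045}.

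The last step is purely algebraic. I would repeatedly use $h\varphi=-\varphi h$ from \eqref{013}, the relation $h^2=(\varepsilon\kappa-1)\varphi^2$ from \eqref{030} together with $\varphi^2X=-X+\eta(X)\xi$ from \eqref{001}, and $h\xi=0$, $\eta\circ h=0$, $\eta\circ\varphi=0$ (and $\xi\kappa=0$ from \eqref{070}), to collapse the many resulting terms into the compact form on the right-hand side. The terms symmetric in $X,Y$ cancel under the antisymmetrisation, while those carrying a factor $h^2$ convert, via \eqref{030}, into the coefficients $\varepsilon\kappa-1$ and the $\mu^2$-contribution visible in \eqref{045}.

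The main obstacle is bookkeeping rather than conceptual: differentiating \eqref{041} generates a large number of terms—each of its three groups spawns several under the product rule—and after interchanging $X\leftrightarrow Y$ one must regroup them according to the coefficients $\kappa$, $\mu$, $\mu^2$ and $\varepsilon\kappa-1$. Correctly tracking the signs produced by $h\varphi=-\varphi h$ and the $\eta\otimes\xi$ term in $\varphi^2$ when reducing $h^2$ is where errors are most likely. As at the end of the proof of Lemma \ref{042}, the case $\kappa=\varepsilon$ (equivalently $h=0$) can be dispatched at once, since every surviving term on the right of \eqref{045} then carries a factor of $h$ or of $\varepsilon-\kappa$ and both sides vanish identically.
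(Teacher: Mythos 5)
Your proposal is correct and takes essentially the same route as the paper: both apply the Ricci identity to the tensor $h$, feed in the first-derivative formula \eqref{041} together with \eqref{033}, \eqref{031}, \eqref{030} and \eqref{013}, and then reduce algebraically to \eqref{045}. The only cosmetic difference is your use of a frame with vanishing covariant derivatives at a point (a device the paper itself uses in proving the companion lemma for $R(X,Y)\varphi Z$), whereas the paper's proof of this lemma carries out the same second-derivative computation with general vector fields.
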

\begin{proof}
The Ricci identity for $h$ is
\begin{align}\label{043}
R(X,Y)hZ-hR(X,Y)Z=(\co_{X}\co_{Y}h)Z-(\co_{Y}\co_{X}h)Z-(\co_{[X,Y]}h)Z.
\end{align}
Using Lemma \ref{042}, the relations (\ref{030}), (\ref{013}) and the fact that $\co_{X}\varphi$ is antisymmetric, we obtain
\begin{align*}
(\co_{X}\co_{Y}h)Z=&\{(\varepsilon-\kappa)[g(\co_{X}Y,\varphi Z)-g((\co_{X}\varphi)Y,Z)]\\
&+g(\co_{X}Y,h\varphi Z)+g(\co_{X}(h\varphi)Y,Z)\}\xi\\
&+\{(\varepsilon-\kappa)g(Y,\varphi Z)+g(Y,h\varphi Z)\}\co_{X}\xi\\
&+\varepsilon g(Z,\co_{X}\xi)[\varepsilon h\varphi Y+(\varepsilon\kappa-1)\varphi Y]\\
&+\eta(Z)\{\varepsilon[(\co_{X}h\varphi)Y+h\varphi(\co_{X}Y)]+(\varepsilon\kappa-1)[(\co_{X}\varphi)Y+\varphi(\co_{X}Y)]\}\\
&-\varepsilon\mu\{\eta(\co_{X}Y)\varphi hZ+\varepsilon g(Y,\co_{X}\xi)\varphi hZ+\eta(Y)(\co_{X}\varphi h)Z\}.
\end{align*}
So, using also (\ref{041}), (\ref{033}), (\ref{031}) and
Lemma \ref{042}, equation (\ref{043}) yields
\begin{align}\label{044}\begin{split}
R(X,Y)hZ-h&R(X,Y)Z\\
=&\{(\kappa-\varepsilon)g((\co_{X}\varphi)Y-(\co_{Y}\varphi)X,Z)+g((\co_{X}h\varphi)Y-(\co_{Y}h\varphi)X,Z)\}\xi\\
&+\{(\varepsilon-\kappa)g(Y,\varphi Z)+g(Y,h\varphi Z)\}\co_{X}\xi\\
&-\{(\varepsilon-\kappa)g(X,\varphi Z)+g(X,h\varphi Z)\}\co_{Y}\xi\\
&+g(Z,\co_{X}\xi)[h\varphi Y+(\kappa-\varepsilon)\varphi Y]\\
&-g(Z,\co_{Y}\xi)[h\varphi X+(\kappa-\varepsilon)\varphi X]\\
&+\eta(Z)\{\varepsilon[(\co_{X}h\varphi)Y-(\co_{Y}h\varphi)X]+(\varepsilon\kappa-1)[(\co_{X}\varphi)Y-(\co_{Y}\varphi)X]\}\\
&-\varepsilon\mu\{\eta(Y)(\co_{X}\varphi h)Z-\eta(X)(\co_{Y}\varphi h)Z+2g(X,\varphi Y)\varphi hZ\}.
\end{split}
\end{align}
Using now (\ref{031}), (\ref{052}) and Lemma \ref{042}, we have
\begin{align*}
(\co_{X}\varphi h)Y=&\{g(X,hY)+(\kappa-\varepsilon)g(X,-Y+\eta(Y)\xi)\}\xi\\
&+\eta(Y)[\varepsilon hX+(\varepsilon\kappa-1)(-X+\eta(X)\xi)]+\varepsilon\mu\eta(X)hY.
\end{align*}
Therefore, equation (\ref{044}), by using (\ref{031}) again, is reduced to (\ref{045}) and the proof is completed.
\end{proof}
\begin{theorem}\label{049}
Let $M^{2n+1}(\varphi,\xi,\eta,g)$be a $(\kappa, \mu)$-contact pseudo-metric manifold. If $\varepsilon\kappa< 1$, then for all $X_{\lambda},Z_{\lambda},Y_{\lambda}\in \di(\lambda)$ and $X_{-\lambda},Z_{-\lambda},Y_{-\lambda}\in \di(-\lambda)$, we have
\begin{align}
R(X_{\lambda},Y_{\lambda})Z_{-\lambda}&=(\kappa-\varepsilon\mu)[g(\varphi Y_{\lambda},Z_{-\lambda})\varphi X_{\lambda}-g(\varphi X_{\lambda},Z_{-\lambda})\varphi Y_{\lambda}],\label{41}\\
R(X_{-\lambda},Y_{-\lambda})Z_{\lambda}&=(\kappa-\varepsilon\mu)[g(\varphi Y_{-\lambda},Z_{\lambda})\varphi X_{-\lambda}-g(\varphi X_{-\lambda},Z_{\lambda})\varphi Y_{-\lambda}],\label{42}\\
R(X_{-\lambda},Y_{\lambda})Z_{-\lambda}&=-\kappa g(\varphi Y_{\lambda},Z_{-\lambda}) \varphi X_{-\lambda}-\varepsilon\mu g(\varphi Y_{\lambda},X_{-\lambda})\varphi Z_{-\lambda} ,\label{43}\\
R(X_{-\lambda},Y_{\lambda})Z_{\lambda}&=\kappa g(\varphi X_{-\lambda},Z_{\lambda}) \varphi Y_{\lambda}+\varepsilon\mu g(\varphi X_{-\lambda},Y_{\lambda})\varphi Z_{\lambda} ,\label{44}\\
R(X_{\lambda},Y_{\lambda})Z_{\lambda}&=[2(\varepsilon+\lambda)-\varepsilon\mu][g(Y_{\lambda},Z_{\lambda})X_{\lambda}-g(X_{\lambda},Z_{\lambda})Y_{\lambda}],\label{45}\\
R(X_{-\lambda},Y_{-\lambda})Z_{-\lambda}&=[2(\varepsilon-\lambda)-\varepsilon\mu][g(Y_{-\lambda},Z_{-\lambda})X_{-\lambda}-g(X_{-\lambda},Z_{-\lambda})Y_{-\lambda}],\label{46}
\end{align}
\end{theorem}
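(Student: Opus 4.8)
The plan is to read off all six identities from the two ``master'' Ricci identities already in hand, namely the $\varphi$-identity \eqref{006} and the $h$-identity \eqref{045}, exploiting the orthogonal splitting $TM=\di(0)\oplus\di(\lambda)\oplus\di(-\lambda)$ of Theorem~\ref{037} together with Proposition~\ref{47}. Throughout I would use that on $\di(\pm\lambda)$ one has $hX_{\pm\lambda}=\pm\lambda X_{\pm\lambda}$ and $\varphi hX_{\pm\lambda}=\pm\lambda\varphi X_{\pm\lambda}$, that $\varphi$ interchanges $\di(\lambda)$ and $\di(-\lambda)$, that every eigenvector lies in $\ker\eta$, and that $g$ vanishes between $\di(\lambda)$ and $\di(-\lambda)$; these facts kill the vast majority of the terms appearing in \eqref{006} and \eqref{045}, which is what makes the scheme manageable.

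First I would substitute each of the six eigenvalue configurations into \eqref{045}. Since its left side is $(\pm\lambda)R(X,Y)Z-hR(X,Y)Z$, comparing $h$-eigenvalues forces at once the containments $R(\di(\lambda),\di(\lambda))\di(\lambda)\subseteq\di(\lambda)$, $R(\di(\lambda),\di(\lambda))\di(-\lambda)\subseteq\di(-\lambda)$ and their mirrors (no $\xi$-component can occur, as $h\xi=0$ would give eigenvalue $0\neq\pm\lambda$): thus each product lands in the eigenspace asserted by \eqref{41}--\eqref{46}. In the two mixed configurations the surviving $\kappa$- and $\mu$-terms of \eqref{045} are nonzero, and solving the resulting linear relation for the constrained eigencomponent already yields the whole right-hand sides of \eqref{43} and \eqref{44}, except for one opposite-eigenspace component that \eqref{045} leaves free.

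To remove those leftover components I would invoke the pair symmetry $g(R(X,Y)Z,W)=g(R(Z,W)X,Y)$: pairing $R(X_{-\lambda},Y_{\lambda})Z_{-\lambda}$ against a vector of $\di(-\lambda)$ rewrites it as $R(\di(-\lambda),\di(-\lambda))\di(-\lambda)$ tested against $\di(\lambda)$, which vanishes by the Step-one containment, and analogously for \eqref{44}; this completes \eqref{43} and \eqref{44}. Next I would obtain \eqref{41} and \eqref{42} from these now-known mixed cases through the first Bianchi identity $R(X,Y)Z+R(Y,Z)X+R(Z,X)Y=0$, which expresses $R(X_{\lambda},Y_{\lambda})Z_{-\lambda}$ as a difference of two $R(\di(-\lambda),\di(\lambda))\di(\lambda)$ terms; a short reduction using $g(\varphi A,B)=-g(A,\varphi B)$ collapses the separate $\kappa$ and $\mu$ contributions into the single coefficient $\kappa-\varepsilon\mu$.

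Finally, the pure formulas \eqref{45} and \eqref{46} are the deepest point, because there \eqref{045} delivers only membership in $\di(\pm\lambda)$ and no scalar, and the Bianchi identity together with the pair symmetry merely close up among constant-curvature-type tensors without fixing the constant. Here I would feed the now-known \eqref{41} into \eqref{006} with all three arguments in $\di(\lambda)$; after discarding every term that dies by orthogonality, \eqref{006} reduces to $R(X_{\lambda},Y_{\lambda})\varphi Z_{\lambda}=\varphi R(X_{\lambda},Y_{\lambda})Z_{\lambda}-(1+\varepsilon\lambda)(\varepsilon+\lambda)[g(Y_{\lambda},Z_{\lambda})\varphi X_{\lambda}-g(X_{\lambda},Z_{\lambda})\varphi Y_{\lambda}]$. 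Inserting \eqref{41} on the left, applying $\varphi$ once more (so that $\varphi^{2}=-1$ on $\ker\eta$), and simplifying with $\lambda^{2}=1-\varepsilon\kappa$ and $\varepsilon^{2}=1$ turns the bracketed coefficient into $2(\varepsilon+\lambda)-\varepsilon\mu$, which is precisely \eqref{45}; \eqref{46} follows by the same computation with $\lambda$ replaced by $-\lambda$. The main obstacle is organizational rather than conceptual: tracking exactly which of the numerous inner products in \eqref{045} and \eqref{006} survive the orthogonality of $\di(\lambda)$ and $\di(-\lambda)$, and carrying the coefficient reductions (repeatedly invoking $\lambda^{2}=1-\varepsilon\kappa$) through without sign errors. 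The one genuinely non-mechanical insight is recognising that the pure cases must be broken open with the extra input \eqref{006}, since \eqref{045} and the curvature symmetries are self-contained and leave the scalar undetermined.
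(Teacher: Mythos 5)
Your proposal is correct, and its backbone coincides with the paper's: both extract the mixed-eigenspace curvature from the Ricci identity for $h$, equation \eqref{045}, and both use the first Bianchi identity to pass to \eqref{41}--\eqref{42}. The differences are organizational but real. The paper proves \eqref{41} directly by expanding $R(X_{\lambda},Y_{\lambda})Z_{-\lambda}$ in a local $\varphi$-basis, killing the $\xi$-component with \eqref{62}, the $\di(\lambda)$-components with Proposition \ref{47}, and computing the $\di(-\lambda)$-components with Bianchi plus the scalar relation \eqref{047} derived from \eqref{045}; the remaining five cases are declared ``similar'' and omitted. You instead work invariantly: comparing $h$-eigenvalues on both sides of \eqref{045} gives the containments $R(\di(\pm\lambda),\di(\pm\lambda))\di(\mp\lambda)\subseteq\di(\mp\lambda)$ etc.\ and, in the mixed configurations, the full determined component of \eqref{43}--\eqref{44}; the residual opposite-eigenspace component is then annihilated by the pair symmetry $g(R(X,Y)Z,W)=g(R(Z,W)X,Y)$ together with the containments, which lets you dispense with Proposition \ref{47} and the basis altogether. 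Your treatment of the pure cases is the most valuable divergence: you correctly identify that \eqref{045}, Bianchi and the curvature symmetries cannot produce the scalar $2(\varepsilon\pm\lambda)-\varepsilon\mu$, and you supply the missing input by restricting \eqref{006} to a single eigenspace and feeding in \eqref{41}--\eqref{42} (your coefficient computation $(\kappa-\varepsilon\mu)+(1+\varepsilon\lambda)(\varepsilon+\lambda)=2(\varepsilon+\lambda)-\varepsilon\mu$ checks out); the paper's ``similar'' glosses over exactly this point, since its mixed-case method does not transfer. One small item worth a sentence in a full write-up: when you conclude that a component vanishes because it pairs to zero against its own eigenspace, you are using that $g$ restricted to $\di(\pm\lambda)$ is nondegenerate; this follows from the orthogonality of the splitting $\di(0)\oplus\di(\lambda)\oplus\di(-\lambda)$ and the nondegeneracy of $g$, but in the pseudo-Riemannian setting it should be said explicitly.
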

\begin{proof}
The first part of the Theorem follows from (\ref{030}) and Lemma \ref{037}.\\
Let $\{E_{1},\cdots,E_{n},E_{n+1}=\varphi E_{1},\cdots,E_{2n}=\varphi E_{n},E_{2n+1}=\xi\}$ be a (local) $\varphi$-basis of vector fields on $M$ with $E_{i}\in\di(\lambda)$ and so $\varphi E_{i}\in\di(-\lambda)$. For any index $i =1,\cdots,2n$,$\{\xi, E_{i}\}$ spans a non-degenerate plane on the tangent space at each point, where the basis is defined. Then, we have
\begin{align}\label{048}\begin{split}
R(X_{\lambda},Y_{\lambda})Z_{-\lambda}=&\sum_{i=1}^{n}\varepsilon_{i}\{g(R(X_{\lambda},Y_{\lambda})Z_{-\lambda},E_{i})E_{i}+g(R(X_{\lambda},Y_{\lambda})Z_{-\lambda},\varphi E_{i})\varphi E_{i}\}\\
&+\varepsilon g(R(X_{\lambda},Y_{\lambda})Z_{-\lambda},\xi)\xi.
\end{split}
\end{align}
But since $\xi$ belonging to the $(\kappa ,\mu)$-nullity distribution, using (\ref{62}), we easily have
$$g(R(X_{\lambda},Y_{\lambda})Z_{-\lambda},\xi)=-g(R(X_{\lambda},Y_{\lambda})\xi,Z_{-\lambda})=0.$$
By Proposition \ref{47}, we get
$$g(R(X_{\lambda},Y_{\lambda})Z_{-\lambda},E_{i})=-g(R(X_{\lambda},Y_{\lambda})E_{i},Z_{-\lambda})=0.$$
On the other hand, if $X\in\di(\lambda)$ and $Y,Z\in\di(-\lambda)$, then applying (\ref{045}), we get
$$hR(X,Y)Z+\lambda R(X,Y)Z=-2\lambda\{\kappa g(X,\varphi Z)\varphi Y+\varepsilon\mu g(X,\varphi Y)\varphi Z\},$$
and taking the inner product with $W\in\di(\lambda)$, we obtain
\begin{align}\label{047}
g(R(X,Y)Z,W)=-\kappa g(X,\varphi Z)g(\varphi Y,W)-\varepsilon\mu g(X,\varphi Y)g(\varphi Z,W),
\end{align}
for any $X,W\in\di(\lambda)$ and $Y,Z\in\di(-\lambda)$. Using (\ref{047}) and the first Bianchi identity, we calculate
\begin{align*}
\sum_{i=1}^{n}&\varepsilon_{i}g(R(X_{\lambda},Y_{\lambda})Z_{-\lambda},\varphi E_{i})\varphi E_{i}\\
=&-\sum_{i=1}^{n}\varepsilon_{i}g(R(Y_{\lambda},Z_{-\lambda})X_{\lambda},\varphi E_{i})\varphi E_{i}-\sum_{i=1}^{n}\varepsilon_{i}g(R(Z_{-\lambda},X_{\lambda})Y_{\lambda},\varphi E_{i})\varphi E_{i}\\
=&\sum_{i=1}^{n}\varepsilon_{i}g(R(Y_{\lambda},Z_{-\lambda})\varphi E_{i},X_{\lambda})\varphi E_{i}-\sum_{i=1}^{n}\varepsilon_{i}g(R(X_{\lambda},Z_{-\lambda})\varphi E_{i},Y_{\lambda})\varphi E_{i}\\
=&\sum_{i=1}^{n}\varepsilon_{i}\{-\kappa g(Y_{\lambda},\varphi^{2}E_{i})g(\varphi Z_{-\lambda},X_{\lambda})\varphi E_{i}-\varepsilon\mu g(Y_{\lambda},\varphi Z_{-\lambda})g(\varphi^{2}E_{i},X_{\lambda})\varphi E_{i}\}\\
&-\sum_{i=1}^{n}\varepsilon_{i}\{-\kappa g(X_{\lambda},\varphi^{2}E_{i})g(\varphi Z_{-\lambda},Y_{\lambda})\varphi E_{i}-\varepsilon\mu g(X_{\lambda},\varphi Z_{-\lambda})g(\varphi^{2}E_{i},Y_{\lambda})\varphi E_{i}\}\\
=&\kappa g(\varphi Z_{-\lambda},X_{\lambda})\varphi\sum_{i=1}^{n}\varepsilon_{i}g(Y_{\lambda},E_{i})E_{i}+\varepsilon\mu g(Y_{\lambda},\varphi Z_{-\lambda})\varphi\sum_{i=1}^{n}\varepsilon_{i}g(E_{i},X_{\lambda})E_{i}\\
&+\kappa g(Z_{-\lambda},\varphi Y_{\lambda})\varphi\sum_{i=1}^{n}\varepsilon_{i}g(X_{\lambda},E_{i})E_{i}+\varepsilon\mu g(\varphi X_{\lambda},Z_{-\lambda})\varphi\sum_{i=1}^{n}\varepsilon_{i}g(E_{i},Y_{\lambda})E_{i}\\
=&\kappa\{g(Z_{-\lambda},\varphi Y_{\lambda})\varphi X_{\lambda}-g(Z_{-\lambda},\varphi X_{\lambda})\varphi Y_{\lambda}\}\\
&+\varepsilon\mu\{g(\varphi X_{\lambda},Z_{-\lambda})\varphi Y_{\lambda}-g(\varphi Y_{\lambda},Z_{-\lambda})\varphi X_{\lambda}\}\\
=&(\kappa-\varepsilon\mu)\{g(Z_{-\lambda},\varphi Y_{\lambda})\varphi X_{\lambda}-g(Z_{-\lambda},\varphi X_{\lambda})\varphi Y_{\lambda}\}.
\end{align*}
Therefore, (\ref{048}) gives
$$R(X_{\lambda},Y_{\lambda})Z_{-\lambda}=(\kappa-\varepsilon\mu)\{g(Z_{-\lambda},\varphi Y_{\lambda})\varphi X_{\lambda}-g(Z_{-\lambda},\varphi X_{\lambda})\varphi Y_{\lambda}\}.$$
The proof of the remaining cases are similar and will be omitted.
\end{proof}
Then they showed the following.
\begin{theorem}
Let $M^{2n+1}(\varphi,\xi,\eta,g)$be a $(\kappa, \mu)$-contact pseudo-metric manifold. If $\varepsilon\kappa<1$, then for any $X$ orthogonal to $\xi$\\
$(i)$ the $\xi$-sectional curvature $K(X,\xi)$ is given by
\begin{equation*}
K(X,\xi)=\kappa+\mu\dfrac{g(hX,X)}{g(X,X)} =
\left\{
\begin{array}{lr}
\kappa+\lambda\mu,\quad\text{if $X\in\di(\lambda)$}, \\
\\
\kappa-\lambda\mu,\quad\text{if $X\in\di(-\lambda)$},
\end{array}\right.
\end{equation*}
$(ii)$ the sectional curvature of a plane section $(X,Y)$ normal to $\xi$ is given by
\begin{align}\label{025}
K(X,Y)=
\left\{
\begin{array}{lr}
2(\varepsilon+\lambda)-\varepsilon\mu,\quad\quad\quad\quad\text{ for any $X,Y\in\di(\lambda), n>1$},\\
\\
-(\kappa+\varepsilon\mu)\dfrac{g(X,\varphi Y)^{2}}{g(X,X)g(Y,Y)},\quad\text{ for any unit vectors $X\in\di(\lambda), Y\in\di(-\lambda)$},\\
\\
2(\varepsilon-\lambda)-\varepsilon\mu,\quad\quad\quad\quad\text{ for any $X,Y\in\di(-\lambda), n>1$},
\end{array}\right.
\end{align}
$(iii)$\quad The Ricci operator is given by
\begin{align}
\begin{split}\label{024}
QX=&\varepsilon[2(n-1)-n\mu]X+(2(n-1)+\mu)hX+[2(1-n)\varepsilon +2n\kappa +n\varepsilon\mu]\eta(X)\xi.
\end{split}
\end{align}
\end{theorem}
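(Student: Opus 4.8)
The plan is to establish the three statements separately, in each case reducing everything to the explicit curvature identities already obtained. Throughout I would fix a local orthonormal $\varphi$-basis $\{E_{1},\dots,E_{n},\varphi E_{1},\dots,\varphi E_{n},\xi\}$ adapted to the eigenspace splitting of Theorem \ref{037}, so that $E_{i}\in\di(\lambda)$, $\varphi E_{i}\in\di(-\lambda)$, and write $\varepsilon_{i}=g(E_{i},E_{i})=g(\varphi E_{i},\varphi E_{i})$. The Ricci operator is computed as $QX=\sum \varepsilon_{i}\{R(X,E_{i})E_{i}+R(X,\varphi E_{i})\varphi E_{i}\}+\varepsilon R(X,\xi)\xi$, summed over this basis.

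For $(i)$ I would start from the definition (\ref{060}), namely $K(X,\xi)=g(\ell X,X)/(\varepsilon g(X,X))$. Substituting the expression $\ell X=\varepsilon\kappa(X-\eta(X)\xi)+\varepsilon\mu hX$ derived at the beginning of the proof of Lemma \ref{071}, and using $\eta(X)=0$ for $X$ orthogonal to $\xi$, gives immediately $K(X,\xi)=\kappa+\mu\,g(hX,X)/g(X,X)$. The two displayed values then follow by inserting $hX=\pm\lambda X$ on $\di(\pm\lambda)$.

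For $(ii)$ I would invoke Theorem \ref{049}. For $X,Y\in\di(\lambda)$, formula (\ref{45}) yields $R(X,Y)Y=[2(\varepsilon+\lambda)-\varepsilon\mu]\{g(Y,Y)X-g(X,Y)Y\}$, whence $g(R(X,Y)Y,X)=[2(\varepsilon+\lambda)-\varepsilon\mu]\{g(X,X)g(Y,Y)-g(X,Y)^{2}\}$ and the sectional curvature equals $2(\varepsilon+\lambda)-\varepsilon\mu$; the case $X,Y\in\di(-\lambda)$ is identical via (\ref{46}). For unit mutually orthogonal $X\in\di(\lambda)$, $Y\in\di(-\lambda)$, I would instead use the intermediate identity (\ref{047}) with $Z=Y$, $W=X$, together with $g(\varphi Y,X)=g(X,\varphi Y)$, to obtain $g(R(X,Y)Y,X)=-(\kappa+\varepsilon\mu)g(X,\varphi Y)^{2}$, which gives the middle line of (\ref{025}).

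The main work, and the step I expect to be the genuine obstacle, is $(iii)$. My plan is to compute $QX$ first for $X\in\di(\lambda)$: the $E_{i}$-terms are handled by (\ref{45}), giving $(n-1)[2(\varepsilon+\lambda)-\varepsilon\mu]X$ after noting $\sum\varepsilon_{i}g(X,E_{i})E_{i}=X$; the $\varphi E_{i}$-terms are handled by (\ref{43}) read through the antisymmetry $R(X,\varphi E_{i})\varphi E_{i}=-R(\varphi E_{i},X)\varphi E_{i}$ with $\varphi^{2}E_{i}=-E_{i}$, giving $-(\kappa+\varepsilon\mu)X$; and the $\xi$-term is handled by (\ref{023}) with $\eta(X)=\eta(hX)=0$, giving $(\kappa+\mu\lambda)X$. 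Summation yields $QX=aX$ with $a=(n-1)[2(\varepsilon+\lambda)-\varepsilon\mu]+\mu(\lambda-\varepsilon)$. The analogous computation on $\di(-\lambda)$, now routed through (\ref{46}) and (\ref{44}), gives $QX=bX$ with $b=(n-1)[2(\varepsilon-\lambda)-\varepsilon\mu]-\mu(\varepsilon+\lambda)$. Finally I would write an arbitrary $X=X_{\lambda}+X_{-\lambda}+\eta(X)\xi$, use $hX_{\pm\lambda}=\pm\lambda X_{\pm\lambda}$ and $Q\xi=2n\kappa\xi$ from (\ref{054}), and determine the coefficients of $QX=\alpha X+\beta hX+\gamma\eta(X)\xi$ from $\alpha=\tfrac12(a+b)$, $\beta=\tfrac1{2\lambda}(a-b)$, $\gamma=2n\kappa-\alpha$; a short simplification identifies these with $\varepsilon[2(n-1)-n\mu]$, $2(n-1)+\mu$ and $2(1-n)\varepsilon+2n\kappa+n\varepsilon\mu$ respectively, which is (\ref{024}). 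The delicate points are keeping the index signs $\varepsilon_{i}$ and the $\di(\pm\lambda)$-projections straight, and correctly routing each curvature term through the antisymmetries of $R$ so that the appropriate formula among (\ref{41})--(\ref{46}) applies.
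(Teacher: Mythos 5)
Your proposal is correct and follows essentially the same route as the paper: part (i) from the nullity condition applied with $Y=\xi$, part (ii) directly from Theorem \ref{049}, and part (iii) by summing the curvature formulas of Theorem \ref{049} over a $\varphi$-basis adapted to the eigenspace decomposition $X=X_{\lambda}+X_{-\lambda}+\eta(X)\xi$. Your organization of (iii) — computing the eigenvalues $a,b$ of $Q$ on $\di(\pm\lambda)$ and then solving for the coefficients $\alpha,\beta,\gamma$ via $\alpha=\tfrac12(a+b)$, $\beta=\tfrac{1}{2\lambda}(a-b)$, $\gamma=2n\kappa-\alpha$ — is merely a cleaner bookkeeping of the same computation, and your values of $a$ and $b$ do reproduce (\ref{024}).
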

\begin{proof}
$(i)$ From (\ref{060}), if we set $Y=\xi$ in the relation of (\ref{62}), for $X$ orthogonal to $\xi$ from which, taking the inner product with $X$, we get
$$K(X,\xi)=\dfrac{\varepsilon\{\kappa g(X,X)+\mu g(hX,X)\}}{\varepsilon g(X,X)}.$$
So, we have
\begin{align*}
K(X,\xi)&=\kappa+\mu \dfrac{g(hX,X)}{g(X,X)}\\
&=\kappa+\mu \dfrac{\lambda g(hX_{\lambda},X_{\lambda})-\lambda g(hX_{-\lambda},X_{-\lambda})}{g(X_{\lambda},X_{\lambda})+g(X_{-\lambda},X_{-\lambda})},
\end{align*}
which is the required result.
\begin{itemize}
  \item[(ii)] This follows immediately from Theorem \ref{049}.
  \item[(iii)] The first consider a $\varphi$-basis $\{E_{1}, \ldots, E_{n}, E_{n+1}=\varphi E_{1}, \ldots,$ $ E_{2n}=\varphi E_{n}, E_{2n+1}=\xi \}$ of vector fields on $M$.
\end{itemize}
For any index $i =1,\ldots, 2n$,$\{\xi, E_{i}\}$ spans a non-degenerate plane on the tangent space at each point, where the basis is defined. Putting $Y=Z=E_{i}$ in $R(X,Y)Z$, adding with respect to index of $i$ and using (\ref{001}), (\ref{002}) and (\ref{013}), we get the following formula, for the Ricci operator, at any point of $M$:
$$QX=\sum_{i=1}^{n}\varepsilon_{i}\{R(X,E_{i})E_{i}+R(X,\varphi E_{i})\varphi E_{i}\}+\varepsilon R(X,\xi)\xi.$$
Suppose now that $X$ is arbitrary vector fields and write
$$X=X_{\lambda}+X_{-\lambda}+\eta(X)\xi,$$
On the other hand, from Theorem \ref{049}, we have
\begin{align*}
QX=&\sum_{i=1}^{n}\varepsilon_{i}\{R(X_{\lambda},E_{i})E_{i}+R(X_{-\lambda},E_{i})E_{i}+\eta(X)R(\xi,E_{i})E_{i}+R(X_{\lambda},\varphi E_{i})\varphi E_{i}\\
&+\eta(X)R(\xi,\varphi E_{i})\varphi E_{i}+R(X_{-\lambda},\varphi E_{i})\varphi E_{i}\}+\varepsilon R(X_{\lambda},\xi)\xi+\varepsilon R(X_{-\lambda},\xi)\xi\\
=&[2(\varepsilon+\lambda)-\varepsilon\mu](n-1)X_{\lambda}-(\kappa+\varepsilon\mu)X_{-\lambda}+n\kappa\eta(X)\xi-(\kappa+\varepsilon\mu)X_{\lambda}\\
&+[2(\varepsilon-\lambda)-\varepsilon\mu](n-1)X_{-\lambda}+(\kappa+\varepsilon\mu)X_{\lambda}+n\kappa\eta(X)\xi+(\kappa+\mu h)X_{-\lambda}\\
=&\varepsilon[(2-\mu)(n-1)-\mu](X_{\lambda}+X_{-\lambda})\\
&+[2(n-1)+\mu]h(X_{\lambda}+X_{-\lambda})+2n\kappa\eta(X)\xi.
\end{align*}
So, the relation of (\ref{024}) is obtained.
\end{proof}

\begin{theorem}\label{087}
 Let $M^{2n+1}(\eta, \xi, \varphi, g)$ be a $(\kappa, \mu)$-contact pseudo-metric manifold and $n>1$. If the $\varphi$-sectional curvature of any point of $M$ is independent of the choice of $\varphi$-section at the point, then it is constant on $M$ and the curvature tensor
is given by
\begin{align}\label{022}
\begin{split}
R(X, Y)Z=&(\frac{c+3\varepsilon}{4})\{g(Y, Z)X-g(X, Z)Y\}\\
&+(\dfrac{c-\varepsilon}{4})\{2g(X, \varphi Y)\varphi Z+g(X, \varphi Z)\varphi Y-g(Y, \varphi Z)\varphi X\}\\
&+(\dfrac{c+3\varepsilon}{4}-\kappa)\{\varepsilon\eta(X)\eta(Z)Y-\varepsilon\eta(Y)\eta(Z)X+\eta(Y)g(X, Z)\xi-\eta(X)g(Y, Z)\xi\}\\
&+\{-g(X, Z)hY-g(hX, Z)Y+g(Y, Z)hX+g(hY, Z)X\}\\
&+\dfrac{\varepsilon}{2}\{-g(hX, Z)hY+g(hY, Z)hX+g(\varphi hX, Z)\varphi hY-g(\varphi hY, Z)\varphi hX\}\\
&+(1-\mu)\{\varepsilon\eta(X)\eta(Z)hY+\eta(Y)g(hX, Z)\xi-\varepsilon\eta(Y)\eta(Z)hX-\eta(X)g(hY, Z)\xi\},
\end{split}
\end{align}
where $c$ is the constant $\varphi$-sectional curvature. Moreover if $\kappa\neq\varepsilon$, then $\mu=\varepsilon\kappa+1$ and
$c=-2\kappa-\varepsilon$.
\end{theorem}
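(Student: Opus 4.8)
The plan is to extract everything from the explicit curvature components of Theorem~\ref{049} and Proposition~\ref{47}, reserving the Sasakian case $\kappa=\varepsilon$ for a classical argument. I fix a point $p$ and use the splitting $T_pM=\di(0)\oplus\di(\lambda)\oplus\di(-\lambda)$ of Theorem~\ref{037}; the hypothesis $n>1$ is what makes $\dim\di(\pm\lambda)=n\ge 2$, and this is used crucially below.

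Assume first $\varepsilon\kappa<1$, so $\lambda=\sqrt{1-\varepsilon\kappa}>0$ and $\kappa\neq\varepsilon$. Pick $g$-orthonormal $u,w\in\di(\lambda)$ (possible because $n\ge 2$); then $\varphi u,\varphi w\in\di(-\lambda)$. For $t\in\mathbb{R}$ set $X=u+t\,\varphi w$, whence $\varphi X=-t\,w+\varphi u$ and $g(X,X)=1+t^{2}$, $\eta(X)=0$, $g(X,\varphi X)=0$. Expanding $R(X,\varphi X)\varphi X$ by multilinearity produces eight terms, each (after using the antisymmetry of $R$) falling into one of the six patterns \eqref{41}--\eqref{46}; substituting these and pairing with $X$ gives
\begin{align*}
\mathcal{R}(X,\varphi X,X,\varphi X)=-(\kappa+\varepsilon\mu)+t^{2}\bigl(4\varepsilon+2\kappa-6\varepsilon\mu\bigr)-t^{4}(\kappa+\varepsilon\mu),
\end{align*}
so the $\varphi$-sectional curvature is $K(X,\varphi X)=\mathcal{R}(X,\varphi X,X,\varphi X)/(1+t^{2})^{2}$, equal to $-(\kappa+\varepsilon\mu)$ at $t=0$. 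Independence of the chosen $\varphi$-section means $K(X,\varphi X)$ does not depend on $t$; equating the numerator with $-(\kappa+\varepsilon\mu)(1+t^{2})^{2}$ and comparing the coefficient of $t^{2}$ yields $4\varepsilon+2\kappa-6\varepsilon\mu=-2(\kappa+\varepsilon\mu)$, i.e. $\varepsilon+\kappa-\varepsilon\mu=0$. Multiplying by $\varepsilon$ gives $\mu=\varepsilon\kappa+1$, and then $c=-(\kappa+\varepsilon\mu)=-2\kappa-\varepsilon$. The parameter $\lambda$ cancels and the $t^{0}$, $t^{4}$ coefficients match automatically, which is precisely the ``moreover'' statement.

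To obtain \eqref{022} itself I reassemble the full tensor. For arbitrary $X,Y,Z$ write $X=X_{\lambda}+X_{-\lambda}+\eta(X)\xi$ and similarly for $Y,Z$, evaluate the purely horizontal contributions with \eqref{41}--\eqref{46}, and evaluate every term containing a $\xi$-factor with \eqref{62} and \eqref{023}. After inserting $\mu=\varepsilon\kappa+1$, $\lambda^{2}=1-\varepsilon\kappa$, $c=-2\kappa-\varepsilon$, and rewriting the eigenprojections through $h$ via $hX_{\pm\lambda}=\pm\lambda X_{\pm\lambda}$ and $h^{2}=(\varepsilon\kappa-1)\varphi^{2}$ from \eqref{030}, the horizontal, mixed and $\xi$-contributions collapse into the six grouped brackets on the right of \eqref{022}. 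The Sasakian case $\kappa=\varepsilon$ ($h=0$, $\lambda=0$) is treated separately: polarizing the quartic $X\mapsto\mathcal{R}(X,\varphi X,X,\varphi X)$ with the help of the commutation formula \eqref{006} (which now reduces to the Sasakian curvature identity) gives the pseudo-Riemannian Sasakian-space-form tensor, and this is exactly \eqref{022} with all $h$-terms deleted and $\kappa=\varepsilon$.

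It remains to see that $c$ is constant on $M$. When $\kappa\neq\varepsilon$ this is immediate from $c=-2\kappa-\varepsilon$ (recall $(\kappa,\mu)\in\mathbb{R}^{2}$); for $\kappa=\varepsilon$ one runs the standard Schur-type argument, differentiating \eqref{022} and feeding the result into the second Bianchi identity, which forces $\grad c=0$ once $n>1$. I expect the real effort to lie in the reassembly of the third paragraph: propagating the $h$-corrections through all of \eqref{41}--\eqref{46} and checking that the numerous mixed terms reorganize exactly into the bracketed groups of \eqref{022} is lengthy bookkeeping, whereas the one-parameter computation giving $\mu=\varepsilon\kappa+1$ is short and is the conceptual heart of the proof.
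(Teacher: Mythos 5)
Your proposal is correct in substance but follows a genuinely different route from the paper. The paper derives \eqref{022} \emph{first}, with $\kappa,\mu$ unconstrained, by a long polarization argument built on the commutation formula \eqref{006} (the chain \eqref{007}--\eqref{021}); only afterwards does it obtain constancy of $c$ and the relation $\mu=\varepsilon\kappa+1$, by tracing \eqref{022} to get the Ricci operator, comparing with \eqref{024}, and matching against $c=-(\kappa+\varepsilon\mu)$, which is where Theorem \ref{049} enters (through \eqref{025} and \eqref{062}). You reverse the logic: you extract $\mu=\varepsilon\kappa+1$ and $c=-2\kappa-\varepsilon$ directly from the one-parameter family $X=u+t\varphi w$ mixing the two eigendistributions, and only then reconstruct \eqref{022} from the components \eqref{41}--\eqref{46}, \eqref{62} and \eqref{023}. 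Your key computation checks out: expanding via \eqref{41}--\eqref{46} does give the numerator $-(\kappa+\varepsilon\mu)+t^{2}(4\varepsilon+2\kappa-6\varepsilon\mu)-t^{4}(\kappa+\varepsilon\mu)$ (this is consistent with the formula $K(X,\varphi X)=-(\kappa+\varepsilon\mu)+4(\kappa-\varepsilon\mu+\varepsilon)\bigl(g(X_{\lambda},X_{\lambda})g(X_{-\lambda},X_{-\lambda})-g(X_{\lambda},\varphi X_{-\lambda})^{2}\bigr)$ that the paper itself uses in proving Theorem \ref{088}), and the $t^{2}$-comparison yields $\varepsilon+\kappa-\varepsilon\mu=0$, i.e.\ $\mu=\varepsilon\kappa+1$. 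Your route gives a much shorter, more conceptual proof of the ``moreover'' clause, with the role of $n>1$ (two independent directions in $\di(\lambda)$) transparent; its cost is that \eqref{022} is established only under the constraint $\mu=\varepsilon\kappa+1$ (which is logically sufficient, since the theorem asserts the constraint simultaneously), whereas the paper's polarization proves \eqref{022} as stated for arbitrary $(\kappa,\mu)$.

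Two gaps you should close. First, the reassembly of \eqref{022} --- the bulk of the paper's proof --- is compressed into an asserted ``collapse'' that you never perform. It is a finite verification and it does succeed: for instance, for $X,Y,Z\in\di(\lambda)$, formula \eqref{022} with $c=-2\kappa-\varepsilon$ and $\varepsilon\lambda^{2}=\varepsilon-\kappa$ gives the coefficient $\frac{\varepsilon-\kappa}{2}+2\lambda+\frac{\varepsilon\lambda^{2}}{2}=(\varepsilon-\kappa)+2\lambda$, which equals $2(\varepsilon+\lambda)-\varepsilon\mu$ in \eqref{45}; and in the mixed case both \eqref{022} and \eqref{41} reduce to $-\varepsilon\{g(X,\varphi Z)\varphi Y-g(Y,\varphi Z)\varphi X\}$. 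But as written this step is a claim, not a proof. Second, in the pseudo-Riemannian setting you cannot simply ``pick $g$-orthonormal $u,w\in\di(\lambda)$'' with $g(u,u)=g(w,w)=1$: the restriction of $g$ to $\di(\lambda)$ is non-degenerate but of unknown signature, so you must allow $g(u,u)=\pm 1$, $g(w,w)=\pm 1$, discard the parameter values where $g(X,X)$ vanishes (there the plane is degenerate and its sectional curvature undefined), and run the coefficient comparison in each signature case --- it goes through with the same conclusion, but this needs to be said, since your argument as stated silently assumes two spacelike directions exist in $\di(\lambda)$.
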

\begin{proof}
 For the Sasakian case $\kappa=\varepsilon$, the proof is known (\cite{Takahashi:SasakianManifoldWithPseud}). So, we have to prove the theorem for $\kappa\neq \varepsilon$. Let $p\in M$ and $X,$ $Y\in T_{p}M$ orthogonal to $\xi$. Using the first identity of Bianchi, the basic properties of the curvature tensor, $\varphi$ is antisymmetric, $h$ is symmetric, (\ref{001}) and (\ref{002}), we obtain from (\ref{006}), successively:
\begin{align}\label{007}\begin{split}
g(R(X,\varphi X)Y,\varphi Y)=&g(R(X,\varphi Y)Y,\varphi X)+g(R(X,Y)X,Y)-\varepsilon g(X, Y)^{2}-\varepsilon g(hX, Y)^{2} \\
&-2g(X, Y)g(hX, Y)+\varepsilon g(X, X)g(Y, Y)+g(X, X)g(hY, Y) \\
&+g(Y, Y)g(hX, X)+\varepsilon g(hX, X)g(hY, Y)-\varepsilon g(\varphi X, Y)^{2} \\
&+\varepsilon g(\varphi hX, Y)^{2}-\varepsilon g(\varphi hX, X)g(\varphi hY,Y),
\end{split}
\end{align}
\begin{align}\label{008}\begin{split}
g(R(X, \varphi Y)X,\varphi Y)=&g(R(X, \varphi Y)Y,\varphi X)+\varepsilon g(X, Y)^{2}-\varepsilon g(hX, Y)^{2} \\
&-\varepsilon g(\varphi hX, X)g(\varphi hY, Y)-\varepsilon g(X, X)g(Y, Y)-g(Y, Y)g(hX, X) \\
&+g(X, X)g(hY, Y)+\varepsilon g(hX, X)g(hY, Y)+\varepsilon g(\varphi X, Y)^{2} \\
&+\varepsilon g(\varphi hX, Y)^{2}+2g(\varphi X, Y)g(\varphi hX,Y),\end{split}
\end{align}
\begin{align}\label{009}\begin{split}
g(R(Y, \varphi X)Y,\varphi X)=&g(R(X, \varphi Y)Y,\varphi X)+\varepsilon g(X, Y)^{2}-\varepsilon g(hX, Y)^{2} \\
&-\varepsilon g(\varphi hX, X)g(\varphi hY, Y)+\varepsilon g(\varphi X, Y)^{2}+\varepsilon g(\varphi hX, Y)^{2} \\
&-2g(\varphi X, Y)g(\varphi hX, Y)-\varepsilon g(X, X)g(Y, Y)-g(X, X)g(hY, Y) \\
&+g(Y, Y)g(hX, X)+\varepsilon g(hX, X)g(hY, Y),\end{split}
\end{align}
\begin{align}\label{010}\begin{split}
g(R(X, Y)\varphi X,\varphi Y)=&g(R(X, Y)X,Y)-\varepsilon g(X, Y)^{2}-\varepsilon g(hX, Y)^{2} \\
&-2g(X, Y)g(hX, Y)+\varepsilon g(X, X)g(Y, Y)+g(X, X)g(hY, Y) \\
&+g(Y, Y)g(hX, X)+\varepsilon g(hX, X)g(hY, Y)-\varepsilon g(\varphi X, Y)^{2} \\
&+\varepsilon g(\varphi hX, Y)^{2}-\varepsilon g(\varphi hX, X)g(\varphi hY, Y).\end{split}
\end{align}
We now suppose that the $\varphi$-sectional curvature at $p$ is independent of the $\varphi$-section at $p$, i.e. $K(X, \varphi X)=c(p)$ for any $X\in T_{p}M$ orthogonal to $\xi$. Let $X,Y\in T_{p}M$ and $X,Y$
orthogonal to $\xi$ . From
\begin{align*}
g(R(X+Y, \varphi X+\varphi Y)(X+Y),\varphi X+\varphi Y)&=-c(p) g(X+Y, X+Y)^{2},\\
g(R(X-Y, \varphi X-\varphi Y)(X-Y),\varphi X-\varphi Y)&=-c(p)g(X-Y, X-Y)^{2},
\end{align*}
we get by a straightforward calculation
\begin{align}\label{011}\begin{split}
2g(R(X,\varphi X)Y,\varphi Y)&+g(R(X, \varphi Y)X,\varphi Y)+2g(R(X, \varphi Y)Y,\varphi X)+g(R(Y, \varphi X)Y,\varphi X) \\
&=-2c(p)\{2g(X, Y)^{2}+g(X, X)g(Y, Y)\}.\end{split}
\end{align}
Thus with combining (\ref{007}), (\ref{008}), (\ref{009}) and (\ref{011}), we get
\begin{align}\label{012}\begin{split}
&3g(R(X, \varphi Y)Y,\varphi X)+g(R(X, Y)X,Y)-2\varepsilon g(hX, Y)^{2} \\
&-2g(X, Y)g(hX, Y)+g(X, X)g(hY, Y)+g(Y, Y)g(hX, X) \\
&+2\varepsilon g(hX, X)g(hYY)+2\varepsilon g(\varphi hX, Y)^{2}-2\varepsilon g(\varphi hX, X)g(\varphi hY, Y) \\
&=-c(p)\{2g(X, Y)^{2}+g(X, X)g(Y, Y)\}.\end{split}
\end{align}
Now, we replace $Y$ by $\varphi Y$ in (\ref{012}), then using (\ref{009}) and (\ref{013}), we have
\begin{align}\label{014}\begin{split}
&-3g(R(X, Y)\varphi Y,\varphi X)+g(R(X, \varphi Y)X,\varphi Y)-2g(\varphi hX, Y)^{2} \\
&+2g(X, \varphi Y)g(\varphi hX, Y)-g(X, X)g(hY, Y)+g(Y, Y)g(hX, X) \\
&-2g(hX, X)g(hY, Y)+2g(hX, Y)^{2}+2g(\varphi hX, X)g(\varphi hY, Y) \\
&=-c(p)\{2g(X, \varphi Y)^{2}+g(X, X)g(Y, Y)\}.\end{split}
\end{align}
On the other hand, with combining  the relation of (\ref{014}) with (\ref{008}) and (\ref{010}), we get
\begin{align}\label{015}\begin{split}
&3g(R(X, Y)X,Y)+g(R(X, \varphi Y)Y,\varphi X)-2\varepsilon g(X, Y)^{2} \\
&-2\varepsilon g(hX, Y)^{2}-6g(X, Y)g(hX, Y)+2\varepsilon g(X, X)g(Y, Y) \\
&+3g(X, X)g(hY, Y)+3g(Y, Y)g(hX, X)+2\varepsilon g(hX, X)g(hY, Y) \\
&-2\varepsilon g(X, \varphi Y)^{2}+2\varepsilon g(\varphi hX, Y)^{2}-2\varepsilon g(\varphi hX,X)g(\varphi hY, Y) \\
&=-c(p)\{2g(X, \varphi Y)^{2}+g(X, X)g(Y, Y)\}.\end{split}
\end{align}
Now for any $X,Y\in T_{p}M$ and $X,Y$ orthogonal to $\xi$, (\ref{015}) together with (\ref{012}) yield
\begin{align}\label{016}\begin{split}
4g(R(X,Y)Y,X)=&(c(p)+3\varepsilon)\{g(X,X)g(Y,Y)-g(X,Y)^{2}\}+3(c(p)-\varepsilon)g(X,\varphi Y)^{2} \\
&-2\{\varepsilon g(hX,Y)^{2}+4g(X,Y)g(hX,Y)-2g(X,X)g(hY,Y)-2g(Y,Y)g(hX,X) \\
&-\varepsilon g(hX,X)g(hY,Y)-\varepsilon g(\varphi hX,Y)^{2}+\varepsilon g(\varphi hX,X)g(\varphi hY,Y)\}.\end{split}
\end{align}
Let $X,Y,Z\in T_{p}M$ and $X,Y,Z$ orthogonal to $\xi$. Applying (\ref{016}) in
$$g(R(X+Z, Y)Y,X+Z)=g(R(X, Y)Y,X)+g(R(Z, Y)Y,Z)+2g(R(X, Y)Y,Z).$$
Finally, we get
\begin{align}\label{017}\begin{split}
4g(R(X, Y)Y,Z)=&(c(p)+3\varepsilon)\{g(X, Z)g(Y, Y)-g(X,Y)g(Y, Z)\} \\
&+3(c(p)-\varepsilon)g(X, \varphi Y)g(Z, \varphi Y)-2\{\varepsilon g(hX, Y)g(hZ,Y)+2g(X,Y)g(hZ, Y) \\
&+2g(Z, Y)g(hX, Y)-2g(X, Z)g(hY, Y)-2g(Y, Y)g(hX, Z) \\
&-\varepsilon g(hX, Z)g(hY, Y)-\varepsilon g(\varphi hX, Y)g(\varphi hZ, Y)+\varepsilon g(\varphi hX, Z)g(\varphi hY, Y)\}.\end{split}
\end{align}
Moreover, using (\ref{013}), (\ref{62}) and $ h\varphi$ is symmetric, it is easy to check that (\ref{017}) is valid for any $Z$ and for $X,Y$ orthogonal to $\xi$. Hence for any $X,Y$ orthogonal to $\xi$, the relation of (\ref{017}) is reduced to
\begin{align}\label{018}\begin{split}
4R(X, Y)Y=&(c(p)+3\varepsilon)\{g(Y, Y)X-g(X, Y)Y\}+3(c(p)-\varepsilon)g(X, \varphi Y)\varphi Y \\
&-2\{\varepsilon g(hX, Y)hY+2g(X, Y)hY+2g(hX, Y)Y-2g(hY, Y)X \\
&-2g(Y, Y)hX-\varepsilon g(hY, Y)hX-\varepsilon g(\varphi hX, Y)\varphi hY+\varepsilon g(\varphi hY, Y)\varphi hX\}.\end{split}
\end{align}
Now, let $X,Y,Z$ be orthogonal to $\xi$. Replacing $Y$ by $Y+Z$ in (\ref{018}). Then from
$$R(X,Y+Z)(Y+Z)=R(X,Y)Y+R(X,Z)Z+R(X,Y)Z+R(X,Z)Y,$$
we get
\begin{align}\label{019}\begin{split}
4\{R(X, Y)Z+R(X, Z)Y\}=&(c(p)+3\varepsilon)\{2g(Y, Z)X-g(X, Y)Z-g(X, Z)Y\} \\
&+3(c(p)-\varepsilon)\{g(X, \varphi Y)\varphi Z+g(X, \varphi Z)\varphi Y\} \\
&-2\{\varepsilon g(hX, Y)hZ+g(hX, Z)hY+2g(X, Y)hZ \\
&+2g(X, Z)hY+2g(hX, Y)Z+2g(hX, Z)Y-4g(hY, Z)X \\
&-4g(Y, Z)hX-2\varepsilon g(hY, Z)hX-\varepsilon g(\varphi hX, Y)\varphi hZ \\
&-\varepsilon g(\varphi hX, Z)\varphi hY+2\varepsilon g(\varphi hY, Z)\varphi hX\}.\end{split}
\end{align}
Replacing $X$ by $Y$ and $Y$ by $-X$ in (\ref{019}), we have
\begin{align}\label{020}\begin{split}
4\{R(X, Y)Z+R(Z, Y)X\}=&(c(p)+3\varepsilon)\{-2g(X, Z)Y+g(X, Y)Z+g(Y, Z)X\} \\
&+3(c(p)-\varepsilon)\{-g(\varphi X, Y)\varphi Z-g(\varphi Z, Y)\varphi X\} \\
&-2\{-\varepsilon g(hY, X)hZ-\varepsilon g(hY, Z)hX-2g(X, Y)hZ \\
&-2g(Y, Z)hX-2g(X, hY)Z-2g(hY, Z)X \\
&+4g(hX, Z)Y+4g(X, Z)hY+2\varepsilon g(hX, Z)hY \\
&+\varepsilon g(\varphi hY, X)\varphi hZ+\varepsilon g(\varphi hY, Z)\varphi hX-2\varepsilon g(\varphi hX, Z)\varphi hY\}.\end{split}
\end{align}
Adding (\ref{019}) and (\ref{020}) and using Bianchi first identity, $\varphi$ is antisymmetric and $\varphi h$ is symmetric, we get
\begin{align}\label{021}\begin{split}
4R(X, Y)Z=&(c(p)+3\varepsilon)\{g(Y, Z)X-g(X, Z)Y\} \\
&+(c(p)-\varepsilon)\{2g(X, \varphi Y)\varphi Z+g(X, \varphi Z)\varphi Y-g(Y, \varphi Z)\varphi X\} \\
&-2\{\varepsilon g(hX, Z)hY+2g(X, Z)hY+2g(hX, Z)Y-2g(hY, Z)X \\
&-2g(Y, Z)hX-\varepsilon g(hY, Z)hX-\varepsilon g(\varphi hX, Z)\varphi hY+\varepsilon g(\varphi hY, Z)\varphi hX\},\end{split}
\end{align}
for any $X,Y,Z$ orthogonal to $\xi$. Moreover, using (\ref{62}), (\ref{052}) and the first part of (\ref{001}), we conclude that (\ref{021}) is valid for any $Z$ and for $X,Y$ orthogonal to $\xi$. Now, let $X,Y,Z$ be arbitrary vectors of $T_{p}M$. Writing
$$X=X_{T}+\eta(X)\xi,\quad Y=Y_{T}+\eta(Y)\xi,$$
where $g(X_{T}, \xi)=g(Y_{T}, \xi)=0$ , and using (\ref{62}), (\ref{023}) and (\ref{052}), then (\ref{021}) gives (\ref{022}) after a straightforward calculation.\\
Now, we will prove that the $\varphi$-sectional curvature is constant. Consider a (local) $\varphi$-basis $\{E_{1},\cdots, E_{n},E_{n+1}=\varphi E_{1}, \cdots,E_{2n}=\varphi E_{n},E_{2n+1}=\xi\}$ of vector fields on $M$. For any index $i =
1,\cdots,2n$,$\{\xi, E_{i}\}$ spans a non-degenerate plane on the tangent space at each point, where the basis is defined. Putting $Y=Z=E_{i}$ in (\ref{022}), adding with respect to $i$ and using (\ref{001}), (\ref{002}) and (\ref{013}), we get the following formula, for the Ricci operator, at any point of $M$:
\begin{align*}\begin{split}
2Q=&\{(n+1)c+3\varepsilon(n-1)+2\kappa\}I \\
&-\{(n+1)c+3\varepsilon(n-1)-2\kappa(2n-1)\}\eta\otimes\xi \\
&+2\{2(n-1)+\mu\}h.\end{split}
\end{align*}
Comparing this with (\ref{024}), which is valid on any $(\kappa, \mu)$-contact pseudo-metric manifold  with $\kappa\neq\varepsilon$, we get
\begin{align}\label{061}
(n+1)c=(n-1)\varepsilon-2n\varepsilon\mu-2\kappa,
\end{align}
i.e. $c$ is constant. On the other hand, from (\ref{025}), we have
\begin{align}\label{062}
 c=-(\kappa+\varepsilon\mu).
\end{align}
Comparing (\ref{061}) and (\ref{062}), we get $(n-1)(\varepsilon\mu-\kappa-\varepsilon)=0$. Moreover, since $n>1$, we have $\mu=\varepsilon\kappa+1$ and so $c=-2\kappa-\varepsilon$. This completes the proof of the theorem.
\end{proof}
\begin{theorem}\label{088}
Let $M^{2n+1}(\varphi,\xi,\eta,g)$ be a $(\kappa, \mu)$-contact pseudo-metric manifold with $\varepsilon\kappa<1$ and $n>1$. Then $M$ has constant $\varphi$-sectional curvature if and
only if $\mu=\varepsilon\kappa+1$ .
\end{theorem}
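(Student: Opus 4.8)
The statement is an equivalence, and the ``only if'' direction is essentially already in hand. Suppose $M$ has constant $\varphi$-sectional curvature. Then, a fortiori, at each point the $\varphi$-sectional curvature does not depend on the chosen $\varphi$-section, so the hypothesis of Theorem~\ref{087} is satisfied. Since $\varepsilon\kappa<1$ we have $\kappa\neq\varepsilon$ (otherwise $\varepsilon\kappa=\varepsilon^{2}=1$), and therefore the concluding clause of Theorem~\ref{087} applies and yields $\mu=\varepsilon\kappa+1$. No further computation is needed in this direction.

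For the converse, assume $\mu=\varepsilon\kappa+1$ and let me show that $K(X,\varphi X)$ is the same for every unit $X\in\ker\eta$, with value $-(\kappa+\varepsilon\mu)=-2\kappa-\varepsilon$. Because $\varepsilon\kappa<1$, Theorem~\ref{037} provides the orthogonal decomposition of $\ker\eta$ into the eigendistributions $\di(\lambda)$ and $\di(-\lambda)$ of $h$, where $\lambda=\sqrt{1-\varepsilon\kappa}$, and \eqref{013} shows that $\varphi$ interchanges the two. Accordingly I write $X=X_{\lambda}+X_{-\lambda}$ with $X_{\lambda}\in\di(\lambda)$ and $X_{-\lambda}\in\di(-\lambda)$, so that $\varphi X=\varphi X_{\lambda}+\varphi X_{-\lambda}$ with $\varphi X_{\lambda}\in\di(-\lambda)$ and $\varphi X_{-\lambda}\in\di(\lambda)$.

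The plan is to expand $R(X,\varphi X)\varphi X$ by multilinearity, evaluate each of the resulting terms by means of the six curvature identities \eqref{41}--\eqref{46} of Theorem~\ref{049} (using $R(A,B)=-R(B,A)$ to bring each term into one of the listed forms), then pair with $X$ and divide by $g(X,X)^{2}$. The substitution $\mu=\varepsilon\kappa+1$ produces the two decisive simplifications $\kappa-\varepsilon\mu=-\varepsilon$ and $\varepsilon\kappa=1-\lambda^{2}$, which let me eliminate $\kappa$ in favour of $\lambda$. The contributions in which $X_{\lambda}$ is paired with $\varphi X_{\lambda}$ and $X_{-\lambda}$ with $\varphi X_{-\lambda}$ already reproduce the value $-(\kappa+\varepsilon\mu)$ recorded in \eqref{025} for a vector lying entirely in $\di(\lambda)$ or $\di(-\lambda)$. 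The crux of the matter — and the only place where the hypothesis genuinely enters — is to verify that the remaining cross terms, which mix the two eigendistributions and which for a general value of $\mu$ would make $K(X,\varphi X)$ depend on the relative sizes of $X_{\lambda}$ and $X_{-\lambda}$, cancel identically precisely when $\mu=\varepsilon\kappa+1$. This is a lengthy but routine substitution; once carried out it gives $K(X,\varphi X)=-(\kappa+\varepsilon\mu)$ for every unit $X\in\ker\eta$ and at every point, i.e. constant $\varphi$-sectional curvature. Equivalently, having thus established pointwise section-independence, one may invoke Theorem~\ref{087} directly to conclude that this common value is constant on $M$. This proves the ``if'' direction and completes the proof.
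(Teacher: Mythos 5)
Your proposal is correct and takes essentially the same route as the paper: the paper also obtains the ``only if'' direction by citing Theorem~\ref{087} (noting $\varepsilon\kappa<1$ forces $\kappa\neq\varepsilon$), and for the ``if'' direction it likewise writes $X=X_{\lambda}+X_{-\lambda}$ and uses Theorems~\ref{037} and~\ref{049} to compute
$K(X,\varphi X)=-(\kappa+\varepsilon\mu)+4(\kappa-\varepsilon\mu+\varepsilon)\bigl(g(X_{\lambda},X_{\lambda})g(X_{-\lambda},X_{-\lambda})-g(X_{\lambda},\varphi X_{-\lambda})^{2}\bigr)$,
whose cross term vanishes precisely because $\mu=\varepsilon\kappa+1$ gives $\kappa-\varepsilon\mu+\varepsilon=0$. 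The cancellation mechanism you single out ($\kappa-\varepsilon\mu=-\varepsilon$) is exactly the coefficient the paper exhibits, and like the paper you leave the intervening multilinear expansion as a routine (unwritten) computation.
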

\begin{proof}
In Theorem \ref{087}, we proved that $\mu=\varepsilon\kappa+1$, in the case where the non Sasakian, $(\kappa, \mu)$-contact pseudo-metric manifold has constant $\varphi$-sectional curvature. Now, we will prove the inverse, i.e. supposing $M(\varphi,\xi,\eta,g)$ is a $(2n+1)$-dimensional $(n>1)$, non Sasakian, $(\kappa, \mu)$-contact pseudo-metric manifold with
\begin{align}\label{a}
\mu=\varepsilon\kappa+1.
\end{align}
We will prove that $M$ has constant $\varphi$-sectional curvature. Let $X\in T_{p}M$ be a unit vector orthogonal to $\xi$. By Theorem \ref{037}, we can write
\begin{center}
$X=X_{\lambda}+X_{-\lambda}\qquad\text{ where}\quad X_{\lambda}\in \di(\lambda) \quad\text{and}\quad X_{-\lambda}\in \di(-\lambda)$.
\end{center}
Using Theorem \ref{037}, Theorem \ref{049} and a long straightforward calculation, we get
$$K(X, \varphi X)=-(\kappa+\varepsilon\mu)+4(\kappa-\varepsilon\mu+\varepsilon)(g(X_{\lambda}, X_{\lambda})g(X_{-\lambda}, X_{-\lambda})-g(X_{\lambda}, \varphi X_{-\lambda})^{2})$$
and hence by (\ref{a}), we have $K(X, \varphi X)=-(\kappa+\varepsilon\mu)=\text{const}$.
\end{proof}
\begin{example}[\cite{GhaffarzadehFaghfouri2}, Theorem 4.1]
 The tangent sphere bundle $T_{\varepsilon}M$ is $(\kappa, \mu)$-contact pseudo-metric manifold
%\begin{align}\label{R}
%\bar{R}(X,Y)\xi=c(4\varepsilon-(c+2))\{\eta(Y)X-\eta(X)Y\}-2\varepsilon c\{\eta(Y)hX-\eta(X)hY\}
%\end{align}
 if and only if the base manifold $M$ is of constant sectional curvature $\varepsilon$ and $\kappa=3\varepsilon-2, \mu=-2\varepsilon$.
 \end{example}
 \begin{theorem}
Let $M$ be an $n$-dimensional pseudo-metric manifold, $n>2$, of constant sectional curvature $c$. The tangent sphere bundle $T_{\varepsilon}M$ has constant $\varphi$-sectional curvature $(-4c(\varepsilon-1)+ c^{2})$ if and only if $c=2\varepsilon\pm\sqrt{4+\varepsilon}$.
\end{theorem}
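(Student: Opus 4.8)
The plan is to combine two facts already established in the paper: the example identifying when $T_\varepsilon M$ is $(\kappa,\mu)$-contact pseudo-metric, and Theorem \ref{088} characterizing constant $\varphi$-sectional curvature by the condition $\mu=\varepsilon\kappa+1$. First I would invoke the preceding example, which states that for a base manifold $M$ of constant sectional curvature, the tangent sphere bundle $T_\varepsilon M$ is a $(\kappa,\mu)$-contact pseudo-metric manifold precisely when $M$ has constant sectional curvature equal to $\varepsilon$, with $\kappa=3\varepsilon-2$ and $\mu=-2\varepsilon$. However, the present theorem drops the assumption that the base curvature equals $\varepsilon$; it allows $M$ to have arbitrary constant sectional curvature $c$. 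So the genuine starting point must be the more general curvature formulas for $T_\varepsilon M$ over a base of constant curvature $c$, from which one reads off the induced structure functions $\kappa=\kappa(c,\varepsilon)$ and $\mu=\mu(c,\varepsilon)$.

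Next I would apply Theorem \ref{088}: for $n>1$ (here the hypothesis $n>2$ is even stronger, ensuring the fibre dimension makes the relevant distributions nontrivial), a non-Sasakian $(\kappa,\mu)$-contact pseudo-metric manifold has constant $\varphi$-sectional curvature if and only if $\mu=\varepsilon\kappa+1$. Substituting the explicit expressions $\kappa(c,\varepsilon)$ and $\mu(c,\varepsilon)$ into the equation $\mu=\varepsilon\kappa+1$ converts the geometric condition into a single polynomial equation in $c$ and $\varepsilon$. Solving that quadratic in $c$ should yield exactly the stated roots $c=2\varepsilon\pm\sqrt{4+\varepsilon}$. Once $\mu=\varepsilon\kappa+1$ holds, Theorem \ref{088} (or the computation in its proof, $K(X,\varphi X)=-(\kappa+\varepsilon\mu)$) gives that the constant $\varphi$-sectional curvature equals $-(\kappa+\varepsilon\mu)$, and I would then verify this simplifies to the claimed value $-4c(\varepsilon-1)+c^2$ after inserting the same expressions for $\kappa$ and $\mu$.

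The main obstacle is the bookkeeping of the curvature of the tangent sphere bundle: I need the correct formulas for $\kappa$ and $\mu$ as functions of the base curvature $c$ and the sign $\varepsilon$, valid for $T_\varepsilon M$ over a base of \emph{arbitrary} constant sectional curvature rather than the normalized case $c=\varepsilon$ appearing in the cited example. If those general expressions are available from the reference \cite{GhaffarzadehFaghfouri2}, the rest is a matter of algebra; the delicate point is ensuring one uses the structure tensors $h$ and the eigenvalue $\lambda=\sqrt{1-\varepsilon\kappa}$ consistently, so that the substitution into $\mu=\varepsilon\kappa+1$ and into $-(\kappa+\varepsilon\mu)$ is done with the right signs. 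The two quadratic roots $2\varepsilon\pm\sqrt{4+\varepsilon}$ and the matching of the final constant to $c^2-4c(\varepsilon-1)$ serve as a consistency check that the substituted formulas are correct.
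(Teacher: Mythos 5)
You have correctly identified the intended strategy: it is the pseudo-metric analogue of the classical Riemannian argument (the Blair--Koufogiorgos--Papantoniou formulas $\kappa=c(2-c)$, $\mu=-2c$ for the unit tangent bundle, followed by the constant-$\varphi$-sectional-curvature criterion, which in this paper is Theorem~\ref{088}), and your $\varepsilon=1$ consistency check does come out right ($c=2\pm\sqrt5$ and $\varphi$-sectional curvature $c^2$). Note also that the paper itself offers \emph{no} proof of this statement --- it is quoted bare, with the relevant computation delegated to the companion reference \cite{GhaffarzadehFaghfouri2} --- so there is no internal proof to match. Precisely for that reason, however, your proposal has a genuine gap rather than a routine omission: the entire mathematical content of the theorem is the pair of functions $\kappa(c,\varepsilon)$, $\mu(c,\varepsilon)$ attached to $T_{\varepsilon}M$ over a base of \emph{arbitrary} constant curvature $c$, together with the verification that the $(\kappa,\mu)$-condition holds at all in that generality. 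You never produce these; you explicitly defer them to the reference. What is left after that deferral is only the final substitution into $\mu=\varepsilon\kappa+1$ and into $-(\kappa+\varepsilon\mu)$, i.e.\ the trivial part.

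Moreover, the one statement in the paper you do invoke actively blocks the plan. The Example quoted from \cite{GhaffarzadehFaghfouri2} asserts that $T_{\varepsilon}M$ is a $(\kappa,\mu)$-contact pseudo-metric manifold \emph{only if} the base curvature equals $\varepsilon$; but $2\varepsilon\pm\sqrt{4+\varepsilon}=\varepsilon$ would force $4+\varepsilon=1$, i.e.\ $\varepsilon=-3$, which is impossible, so the curvature values the theorem is about are never equal to $\varepsilon$. Taken at face value, the Example therefore says that for precisely the bases in question $T_{\varepsilon}M$ is \emph{not} a $(\kappa,\mu)$-space, and Theorem~\ref{088} (which presupposes the $(\kappa,\mu)$-condition, non-Sasakian, $\varepsilon\kappa<1$, $n>1$) cannot be applied at all. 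The surrounding data is not even internally consistent --- for $\varepsilon=-1$ the Example's value $\kappa=3\varepsilon-2=-5$ gives $\varepsilon\kappa=5>1$, contradicting (\ref{030}) of Lemma~\ref{071} and Theorem~\ref{037}, and for $\varepsilon=1$ it gives $\kappa=\varepsilon$, the Sasakian case excluded by Theorem~\ref{088} --- so one cannot simultaneously cite the Example and apply Theorem~\ref{088}. A complete proof has to return to the curvature tensor of the tangent hyperquadric bundle itself: either establish the general formulas $\kappa(c)$, $\mu(c)$ in the pseudo-metric setting (the analogue of $\kappa=c(2-c)$, $\mu=-2c$), or compute the $\varphi$-sectional curvature of $T_{\varepsilon}M$ directly and determine when it is constant. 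That computation is exactly the step your proposal leaves out, and nothing in this paper can substitute for it.
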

\begin{example}
Let $M(\varphi,\xi,\eta,g)$ be a contact pseudo-metric manifold of dimension $2n+1$, with $g(\xi,\xi)=\varepsilon$.
Then, it is easy to check that, for any real constant $a>0$ and by choosing the tensors
$$\overline{\eta}=a\eta,\quad\overline\xi=\frac{1}{a}\xi,\quad\overline{\varphi}=\varphi,\quad\overline{g}=ag+\varepsilon a(a-1)\eta\otimes\eta,$$
 $M(\overline{\varphi},\overline{\xi},\overline{\eta},\overline{g})$ is a new $(\overline{\kappa},\overline{\mu})$-contact pseudo-metric manifold with
\begin{align}\label{089}
\overline{\kappa}=\frac{\kappa+\varepsilon a^{2}-\varepsilon}{a^{2}}, \quad\quad \overline{\mu}=\frac{\mu+2a-2}{a}.
\end{align}
Now, we find the value of $a$, so that $M$ has constant $\varphi$-sectional curvature. Using  Theorem \ref{088}, we must  have $\overline{\mu}=\varepsilon\overline{\kappa}+1$. So, we get
%\begin{align*}
%\dfrac{\mu+2a-2}{a}&=\varepsilon\big(\dfrac{\kappa+\varepsilon a^{2}-\varepsilon}{a^{2}}\big)+1\\
%a\mu+2a^{2}-2a&=\varepsilon\kappa+a^{2}-1+a^{2}\\
%$a(\mu-2)&=\varepsilon\kappa-1\\
$a=\dfrac{(\varepsilon\kappa-1)}{(\mu-2)}$.
%\end{align*}
 In fact with choosing $a=\dfrac{(\varepsilon\kappa-1)}{(\mu-2)}>0$, $M(\overline{\varphi},\overline{\xi},\overline{\eta},\overline{g})$ has constant $\varphi$-sectional curvature
$\overline{c}=-\overline{\kappa}-\varepsilon\overline{\mu}=\varepsilon(1-2\overline{\mu})=\dfrac{(2(\mu-2)^{2}-3(1-\varepsilon\kappa))}{(\varepsilon-\kappa)}$.
\end{example}
\section{generalized  $(\kappa, \mu)$-contact pseudo-metric manifold}
 In \eqref{61} and \eqref{62}, if $\kappa$ and $\mu$ are real smooth functions on $M$, we call generalized  $(\kappa, \mu)$-contact pseudo-metric manifold.

\begin{example}
We consider the $3$-dimensional manifold $M =\{(x,y,z) \in \mathbb{R}^3 | z\neq 0\}$, where $(x,y,z)$ are the standard coordinates in $\mathbb{R}^3$. The vector fields
$$e_{1}=\frac{\partial}{\partial x},\quad e_{2}=\frac{1}{z^{2}}\frac{\partial}{\partial y},\quad e_{3}=2yz^{2}\frac{\partial}{\partial x}+\frac{2x}{z^{6}}\frac{\partial}{\partial y}+\frac{1}{z^{6}}\frac{\partial}{\partial z}$$
are linearly independent at each point of $M$. Let $g$ be the pseudo-Riemannian metric defined by
$g(e_{i},e_{j}) = \varepsilon_{i}\delta_{ij},$ where $ i, j = 1, 2, 3$ and  $\varepsilon_{1}=\varepsilon,\quad\varepsilon_{2}=\varepsilon_{3}=1$. Let $\co$ be the Levi-Civita connection and $R$ the curvature tensor of $g$. We easily get
$$[e_{1},e_{2}]=0,\quad [e_{1},e_{3}]=\frac{2}{z^{4}}e_{2},\quad [e_{2},e_{3}]=2\left(e_{1}+\frac{1}{z^{7}}e_2\right)$$
Let $\eta$ be the $1$-form defined by $\eta(X) =\varepsilon g(X, e_{1})$ for $X\in\Gamma(TM)$. $\eta$ is a contact form. Let $\varphi$ be the $(1,1)$-tensor field, defined by
$\varphi e_{1} = 0,\quad\varphi e_{2} =e_{3},\quad\varphi e_{3} =-e_{2}$.
% Using the linearity of $\varphi$, $d\eta$ and $g$ we find
%$\eta(e_{1})=1,\varphi^{2}X=-X+\eta(X)e_{1}, d\eta(X,Y)=g(X,\varphi Y)$ and $g(\varphi X,\varphi Y)=g(X,Y)-\varepsilon\eta(X)\eta(Y)$ for any $X,Y\in\Gamma(TM)$.
So, $(\varphi,\xi=e_{1},\eta,g)$ defines a contact pseudo-metric structure on $M$. Now using the Koszul formula, we calculate
\begin{align*}
&\co_{e_{1}}e_{2}=-(\varepsilon+\frac{1}{z^{4}})e_{3},\quad \co_{e_{2}}e_{1}=-(\varepsilon+\frac{1}{z^{4}})e_{3},\\
&\co_{e_{1}}e_{3}=(\varepsilon+\frac{1}{z^{4}})e_{2},\quad \co_{e_{3}}e_{1}=(\varepsilon-\frac{1}{z^{4}})e_{2},\\
&\co_{e_{2}}e_{3}=(1+\frac{\varepsilon}{z^{4}})e_{1}+\frac{2}{z^{7}}e_{2},\quad\co_{e_{3}}e_{2}=(\frac{\varepsilon}{z^{4}}-1)e_{1},\\
&\co_{e_{2}}e_{2}=-\frac{2}{z^{7}}e_{3},\quad \co_{e_{1}}e_{1}=\co_{e_{3}}e_{3}=0.
\end{align*}
Also Using \eqref{050}, we obtain $he_{1}=0$, $he_{2}=\lambda e_{2},he_{3}=-\lambda e_{3}$, where $\lambda=\dfrac{1}{z^{4}}$. Now, putting $\mu=2(1+\varepsilon\lambda)$ and $\kappa=\varepsilon(1-\lambda^{2})$, we finally get
\begin{align*}
R(e_{2},e_{1})e_{1}&=\varepsilon(\kappa+\lambda\mu)e_{2},\\
R(e_{3},e_{1})e_{1}&=\varepsilon(\kappa-\lambda\mu)e_{3},\\
R(e_{2},e_{3})e_{1}&=0.
\end{align*}
These relations yield the following, by direct calculations,
$$R(X,Y)\xi=\varepsilon\kappa\{\eta(Y)X-\eta(X)Y\}+\varepsilon\mu\{\eta(Y)hX-\eta(X)hY\},$$
where $\kappa$ and $\mu$ are non-constant smooth functions. Hence $M$ is a generalized $(\kappa,\mu)$-contact
pseudo-metric manifold.
\end{example}
\begin{theorem}\label{th:3.6}
On a non Sasakian, generalized  $(\kappa,\mu)$-contact pseudo-metric manifold $M^{2n+1}$ with $n>1$, the functions $\kappa, \mu$ are constant, i.e., $M^{2n+1}$ is a $(\kappa,\mu)$-contact pseudo-metric manifold.
\end{theorem}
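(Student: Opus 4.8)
The plan is to show that the only content of the hypotheses that is \emph{not} already pointwise is the constancy of $\kappa$ and $\mu$, and to extract that constancy from the differentiated Bianchi identity. First I would record that every algebraic consequence of the nullity condition (\ref{62}) survives unchanged when $\kappa,\mu$ are functions, since those derivations never differentiate $\kappa$ or $\mu$: in particular $h^{2}=(\varepsilon\kappa-1)\varphi^{2}$ (\ref{030}) still holds, so on the non-Sasakian locus $h$ has two smooth distinct eigenvalues $\pm\lambda$, $\lambda=\sqrt{1-\varepsilon\kappa}$, and the orthogonal eigendistributions $\di(\lambda),\di(-\lambda)$ of rank $n$ are defined exactly as in Theorem \ref{037}. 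Thus it suffices to prove $\grad\kappa=\grad\mu=0$.

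Next I would isolate the commutator identities, which turn out to carry \emph{no} derivatives of $\kappa,\mu$. Indeed (\ref{056}) and (\ref{057}), hence (\ref{48}), were obtained by comparing the universal identity (\ref{032}) with (\ref{62}), and neither side of that comparison differentiates $\kappa$ or $\mu$; so (\ref{48}) holds verbatim in the generalized setting. Specializing (\ref{48}) to $X=\xi$ and combining with $(\nabla_Y h)\xi=\varepsilon h\varphi Y+(\varepsilon\kappa-1)\varphi Y$ (a direct consequence of (\ref{033}) and $h^{2}=(\varepsilon\kappa-1)\varphi^{2}$) yields $\nabla_\xi h=\varepsilon\mu h\varphi$. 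From $\tr h^{2}=2n(1-\varepsilon\kappa)$ I then get $-2n\varepsilon\,\xi\kappa=\xi(\tr h^{2})=2\tr(h\,\nabla_\xi h)=2\varepsilon\mu\,\tr(h^{2}\varphi)$, and since $h^{2}\varphi=(1-\varepsilon\kappa)\varphi$ is trace-free this gives $\xi\kappa=0$, recovering (\ref{070}). Hence $\grad\kappa\in\ker\eta$.

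The gradients therefore enter only through the eigenvalue-shifting (symmetric) part of $\nabla h$, which is exactly what the derivation of Lemma \ref{042} suppressed by treating $\lambda$ as constant. So I would re-derive the formula for $(\nabla_X h)Y$ for functions: differentiating $hY=\lambda Y$ for $Y\in\di(\lambda)$ produces an extra term in $X\lambda$ (equivalently in $X\kappa$), and likewise for $\di(-\lambda)$. Substituting this corrected $\nabla h$ into the Ricci identity for $h$ (the generalized form of (\ref{045})) and into the second Bianchi identity
\[
(\nabla_X R)(Y,Z)\xi+(\nabla_Y R)(Z,X)\xi+(\nabla_Z R)(X,Y)\xi=0,
\]
all the terms free of $d\kappa,d\mu$ reproduce the identities already valid in the constant case and cancel, leaving a relation whose sole unknowns are $\grad\kappa$ and $\grad\mu$ paired against $\varphi,h,\varphi h$.

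Finally I would evaluate that residual relation on vectors taken from a single eigenspace, and this is exactly where $n>1$ is used: since $\dim\di(\pm\lambda)=n\ge 2$, inside $\di(\lambda)$ I can choose two independent $g$-orthogonal vectors, and the antisymmetry built into the Bianchi and Ricci identities then lets me separate each transverse component of $\grad\kappa$ from its antisymmetrized partner and conclude that it vanishes; the same computation with $\di(-\lambda)$ finishes $\grad\kappa=0$. With $\kappa$ constant, re-running the Ricci identity for $h$ isolates the $d\mu$ terms and forces $\grad\mu=0$ as well (the $\xi$-component giving $\xi\mu=0$ and the transverse components following by the same rank-$n$ separation). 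I expect the main obstacle to be the careful bookkeeping of the $X\lambda$-corrections to $\nabla h$ and their propagation through $(\nabla_Z R)(X,Y)\xi$, together with verifying that the separation step genuinely fails when $n=1$ — consistent with the non-constant three-dimensional example exhibited just before the theorem.
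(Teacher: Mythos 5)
The paper gives no proof of its own here --- it appeals to Theorems 3.5 and 3.6 of the cited Koufogiorgos--Tsichlias paper --- so the real question is whether your sketch, which follows the same general strategy, actually closes. Much of it does: the algebraic identities and the Codazzi-type relation \eqref{48} do survive for non-constant $\kappa,\mu$ (their derivations never differentiate $\kappa,\mu$), your trace computation of $\xi\kappa=0$ is valid, and constancy of $\kappa$ does come from an $n>1$ linear-independence separation inside the eigendistributions. But you reach for far heavier tools than needed for $\kappa$, and the cancellation you assert ("all terms free of $d\kappa,d\mu$ cancel") is not automatic: the constant-case identities \eqref{041} and \eqref{045}, and Proposition \ref{47}, are themselves false without constancy (for $X,Y\in\di(\lambda)$ the derivative $\co_XY$ acquires a $\di(-\lambda)$-component proportional to derivatives of $\lambda$), so one cannot invoke "the identities already valid in the constant case" inside the generalized computation. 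The clean route, which is essentially the cited proof, needs only \eqref{48}: for $X,Y\in\di(\lambda)$ one has $(\co_Xh)Y=(X\lambda)Y+(\lambda-h)\co_XY$, while the right-hand side of \eqref{48} vanishes there (since $\eta(X)=\eta(Y)=0$ and $g(X,\varphi Y)=0$, $\varphi Y$ lying in $\di(-\lambda)\perp\di(\lambda)$), so
\begin{equation*}
(X\lambda)Y-(Y\lambda)X+(\lambda-h)[X,Y]=0 ,
\end{equation*}
and since $(\lambda-h)[X,Y]$ has no $\di(\lambda)$-component, the $\di(\lambda)$-part gives $(X\lambda)Y=(Y\lambda)X$; with $n\ge2$ choose $Y$ pointwise independent of $X$ to get $X\lambda=0$, argue likewise on $\di(-\lambda)$, and combine with $\xi\kappa=0$ to conclude $\kappa$ is constant. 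This also makes transparent why $n=1$ fails, consistent with the three-dimensional example.

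The genuine gap is your treatment of $\xi\mu$. Once $\kappa$ is constant, Proposition \ref{47} and Lemma \ref{042} do hold with $\mu$ a function ($\mu$ enters \eqref{041} only algebraically, through $\co_\xi h=\varepsilon\mu h\varphi$), and the Ricci identity for $h$ picks up exactly the correction $-\varepsilon(X\mu)\eta(Y)\varphi hZ+\varepsilon(Y\mu)\eta(X)\varphi hZ$ added to the right side of \eqref{045}. Putting $X=\xi$ and $Y,Z\in\ker\eta$, the constant-$\mu$ parts of both sides coincide and one is left with $\varepsilon(Y\mu)\varphi hZ=0$, hence $Y\mu=0$ for every $Y\in\ker\eta$ (no rank-$n$ separation is needed for this step, only $h\neq0$). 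However, this identity can never produce $\xi\mu=0$: decomposing $Y=Y_T+\eta(Y)\xi$, the $(\xi\mu)$-contributions to the correction cancel identically, so the Ricci identity for $h$ is blind to $\xi\mu$, and your parenthetical claim that "the $\xi$-component gives $\xi\mu=0$" is not available; the differentiated nullity condition fed into the second Bianchi identity is blind to it for the same reason (the coefficient of $\xi\mu$ is $\eta(Z)hY-\eta(Y)hZ$, which vanishes or degenerates in every admissible choice). You need a separate argument, and the standard one uses the contact condition: from $Y\mu=0$ on $\ker\eta$ we get $d\mu=(\xi\mu)\eta$, hence $0=d^2\mu=d(\xi\mu)\wedge\eta+(\xi\mu)\,d\eta$; wedging with $\eta$ yields $(\xi\mu)\,\eta\wedge d\eta=0$, and non-degeneracy of the contact form forces $\xi\mu=0$. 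With that step supplied, and the $\kappa$-argument done as above, your outline closes to a correct proof; without it, the constancy of $\mu$ is not established.
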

%Using Lemma \ref{084}, for the $3$-dimensional case, and working as in the case $n>1$, we easily prove the following Theorem.
\begin{theorem}
Let $M$ be a non Sasakian, generalized $(\kappa,\mu)$-contact pseudo-metric manifold. If $\kappa, \mu$ satisfy the condition $a\kappa+b\mu =c$, where $a,b$ and $c$ are constant. Then $\kappa, \mu$ are constant.
\end{theorem}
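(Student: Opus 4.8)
The plan is to separate the two ranges of dimension. For $n>1$ there is nothing new to prove: Theorem~\ref{th:3.6} already forces $\kappa$ and $\mu$ to be constant on any non-Sasakian generalized $(\kappa,\mu)$-contact pseudo-metric manifold of dimension $>3$, irrespective of the linear constraint. Hence the genuine content lies in the three-dimensional case $n=1$, and this is precisely where the hypothesis $a\kappa+b\mu=c$ enters. So I would fix $n=1$, restrict to the open set where $h\neq0$ (equivalently $\varepsilon\kappa<1$, so that $\lambda=\sqrt{1-\varepsilon\kappa}>0$ by \eqref{030}), and, using Theorem~\ref{037}, choose a local $\varphi$-frame $\{\xi,e,\varphi e\}$ adapted to the eigenspaces of $h$, with $he=\lambda e$ and $h\varphi e=-\lambda\varphi e$; constancy on this set will then pass to $M$ by continuity and connectedness.

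First I would record the covariant derivatives of the frame. From \eqref{033} one gets $\co_{e}\xi=-(\varepsilon+\lambda)\varphi e$ and $\co_{\varphi e}\xi=(\varepsilon-\lambda)e$, and metric compatibility pins down the remaining connection data up to two unknown functions $\alpha,\beta$, namely the $\varphi e$-component of $\co_{e}e$ and the $e$-component of $\co_{\varphi e}\varphi e$. The identity $\co_{\xi}h=\varepsilon\mu\,h\varphi$, which follows from \eqref{48} exactly as in the proof of Lemma~\ref{042} and stays valid for functional $\kappa,\mu$, then yields both $\xi\lambda=0$ (hence $\xi\kappa=0$) and the last coefficient $\co_{\xi}e=-\tfrac{\varepsilon\mu}{2}\varphi e$. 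Next I would compute $R(e,\varphi e)\xi$, $R(\xi,e)e$ and $R(\xi,\varphi e)\varphi e$ directly and match them against the nullity condition \eqref{62} and its consequence \eqref{023}; this gives $e\lambda=2\lambda\beta$, $(\varphi e)\lambda=2\lambda\alpha$ together with two relations coupling $\co_{\xi}\alpha,\co_{\xi}\beta,\ e\mu$ and $(\varphi e)\mu$. Finally, eliminating $\co_{\xi}\alpha,\co_{\xi}\beta$ through the integrability identities for $\lambda$ (which use $\xi\lambda=0$, e.g. $[\xi,e]\lambda=\delta\,(\varphi e)\lambda$), I expect to arrive at the clean structural relations
\begin{gather*}
e\mu=-4\varepsilon\lambda\beta=-2\varepsilon\,e\lambda,\qquad (\varphi e)\mu=4\varepsilon\lambda\alpha=2\varepsilon\,(\varphi e)\lambda,
\end{gather*}
and, since $\kappa=\varepsilon(1-\lambda^{2})$, also $e\kappa=-4\varepsilon\lambda^{2}\beta$, $(\varphi e)\kappa=-4\varepsilon\lambda^{2}\alpha$ and $\xi\kappa=0$.

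With these identities in hand, the constraint does the rest. Differentiating $a\kappa+b\mu=c$ along $e$ and along $\varphi e$ and substituting the structural relations gives, after dividing by the nonzero factor $4\varepsilon\lambda$,
\begin{gather*}
\beta(a\lambda+b)=0,\qquad \alpha(b-a\lambda)=0.
\end{gather*}
On any open set where $\beta\neq0$ one would have $a\neq0$ and $\lambda=-b/a$ constant, whence $e\lambda=2\lambda\beta=0$ and, since $\lambda\neq0$, $\beta=0$, a contradiction; the second equation rules out $\alpha\neq0$ in the same way, and the degenerate cases $a=0$ or $b=0$ are covered by these very equations. Thus $\alpha=\beta=0$ everywhere, so $\grad\lambda=0$, $\lambda$ is constant, and $\kappa=\varepsilon(1-\lambda^{2})$ is constant. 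Then $e\mu=(\varphi e)\mu=0$, and because $[e,\varphi e]$ reduces to $2\xi$ when $\alpha=\beta=0$, the integrability relation $e((\varphi e)\mu)-(\varphi e)(e\mu)=[e,\varphi e]\mu=2\,\xi\mu$ forces $\xi\mu=0$. Hence $\grad\mu=0$ and $\mu$ is constant, which is the assertion.

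The main obstacle is the middle step: extracting the structural relations for $e\mu$ and $(\varphi e)\mu$. These do not fall out of a single curvature component but only after matching $R(\xi,e)e$ and $R(\xi,\varphi e)\varphi e$ against \eqref{023} and then eliminating the $\co_{\xi}\alpha,\co_{\xi}\beta$ terms via the integrability of $\lambda$; keeping the $\varepsilon$-signs and the two eigendistributions straight throughout is the delicate part. By contrast, once $e\mu=-2\varepsilon\,e\lambda$ and $(\varphi e)\mu=2\varepsilon\,(\varphi e)\lambda$ are established, the collapse of the gradients under the linear hypothesis is immediate, and the passage from $\kappa$ constant to $\mu$ constant is a one-line integrability argument.
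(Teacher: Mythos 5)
Your proposal is correct and takes essentially the same route as the paper: the paper omits the proof, asserting it is the same as Theorems 3.5 and 3.6 of \cite{Koufogiorgos:OnTheExistenceContactMetricManifolds} in the Riemannian case, and what you outline---reduction to $n=1$ via Theorem~\ref{th:3.6}, then the $h$-eigenframe computation yielding $\xi\lambda=0$, $e\lambda=2\lambda\beta$, $(\varphi e)\lambda=2\lambda\alpha$, $e\mu=-2\varepsilon\,e\lambda$, $(\varphi e)\mu=2\varepsilon\,(\varphi e)\lambda$, followed by differentiating $a\kappa+b\mu=c$ along $e$ and $\varphi e$---is exactly that argument transplanted to the pseudo-metric setting. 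I have checked the key identities (the connection coefficients from \eqref{033}, $\co_{\xi}h=\varepsilon\mu h\varphi$ from \eqref{48}, the curvature matchings against \eqref{62} and \eqref{023}, and the elimination of $\xi\alpha,\xi\beta$ via the bracket identities for $\lambda$), and they hold with the $\varepsilon$-signs as you state, so the proof goes through as described.
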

The proof of the two previous theorems is the same with Theorem 3.5 and Theorem 3.6 in
\cite{Koufogiorgos:OnTheExistenceContactMetricManifolds} for contact
metric case. Therefore, we omit them here.

%\section*{References}

%\bibliography{D:/Dropbox/bibtex/faghfouri}
%\bibliography{mybibfile}

\begin{thebibliography}{10}
\expandafter\ifx\csname url\endcsname\relax
  \def\url#1{\texttt{#1}}\fi
\expandafter\ifx\csname urlprefix\endcsname\relax\def\urlprefix{URL }\fi
\expandafter\ifx\csname href\endcsname\relax
  \def\href#1#2{#2} \def\path#1{#1}\fi

\bibitem{Takahashi:SasakianManifoldWithPseud}
T.~Takahashi, \href{https://doi.org/10.2748/tmj/1178242996}{Sasakian manifold
  with pseudo-{R}iemannian metric}, T\^{o}hoku Math. J. (2) 21 (1969) 271--290.
\newblock \href {http://dx.doi.org/10.2748/tmj/1178242996}
  {\path{doi:10.2748/tmj/1178242996}}.
\newline\urlprefix\url{https://doi.org/10.2748/tmj/1178242996}

\bibitem{Duggal:SpaceTimeManifoldsContactStructures}
K.~L. Duggal, \href{https://doi.org/10.1155/S0161171290000783}{Space time
  manifolds and contact structures}, Internat. J. Math. Math. Sci. 13~(3)
  (1990) 545--553.
\newblock \href {http://dx.doi.org/10.1155/S0161171290000783}
  {\path{doi:10.1155/S0161171290000783}}.
\newline\urlprefix\url{https://doi.org/10.1155/S0161171290000783}

\bibitem{Bejancu1993}
A.~Bejancu, K.~L. Duggal, Real hypersurfaces of indefinite kaehler manifolds,
  International Journal of Mathematics and Mathematical Sciences 16~(3) (1993)
  545--556.
\newblock \href {http://dx.doi.org/10.1155/s0161171293000675}
  {\path{doi:10.1155/s0161171293000675}}.

\bibitem{Calvaruso.Perrone:ContactPseudoMtricManifolds}
G.~Calvaruso, D.~Perrone, Contact pseudo-metric manifolds, Differential
  Geometry and its Applications 28~(5) (2010) 615--634.
\newblock \href {http://dx.doi.org/10.1016/j.difgeo.2010.05.006}
  {\path{doi:10.1016/j.difgeo.2010.05.006}}.

\bibitem{Perrone:CurvatureKcontact}
D.~Perrone, \href{https://doi.org/10.4153/CMB-2013-016-7}{Curvature of
  {$K$}-contact semi-{R}iemannian manifolds}, Canad. Math. Bull. 57~(2) (2014)
  401--412.
\newblock \href {http://dx.doi.org/10.4153/CMB-2013-016-7}
  {\path{doi:10.4153/CMB-2013-016-7}}.
\newline\urlprefix\url{https://doi.org/10.4153/CMB-2013-016-7}

\bibitem{Perrone2014}
D.~Perrone, Contact pseudo-metric manifolds of constant curvature and {CR}
  geometry, Results in Mathematics 66~(1-2) (2014) 213--225.
\newblock \href {http://dx.doi.org/10.1007/s00025-014-0373-7}
  {\path{doi:10.1007/s00025-014-0373-7}}.

\bibitem{Blair:Contactmetricmanifoldssatisfyingnullitycondition}
D.~E. Blair, T.~Koufogiorgos, B.~J. Papantoniou,
  \href{http://dx.doi.org/10.1007/BF02761646}{Contact metric manifolds
  satisfying a nullity condition}, Israel J. Math. 91~(1-3) (1995) 189--214.
\newblock \href {http://dx.doi.org/10.1007/BF02761646}
  {\path{doi:10.1007/BF02761646}}.
\newline\urlprefix\url{http://dx.doi.org/10.1007/BF02761646}

\bibitem{koufogiorgos:Contact.constant.curvature}
T.~Koufogiorgos, \href{http://dx.doi.org/10.3836/tjm/1270042394}{Contact
  {R}iemannian manifolds with constant {$\phi$}-sectional curvature}, Tokyo J.
  Math. 20~(1) (1997) 13--22.
\newblock \href {http://dx.doi.org/10.3836/tjm/1270042394}
  {\path{doi:10.3836/tjm/1270042394}}.
\newline\urlprefix\url{http://dx.doi.org/10.3836/tjm/1270042394}

\bibitem{boeckx:a.full.classification.of.contact.metric}
E.~Boeckx, \href{http://projecteuclid.org/getRecord?id=euclid.ijm/1255984960}{A
  full classification of contact metric {$(k,\mu)$}-spaces}, Illinois J. Math.
  44~(1) (2000) 212--219.
\newline\urlprefix\url{http://projecteuclid.org/getRecord?id=euclid.ijm/125598%
4960}

\bibitem{Koufogiorgos:OnTheExistenceContactMetricManifolds}
T.~Koufogiorgos, C.~Tsichlias, \href{https://doi.org/10.4153/CMB-2000-052-6}{On
  the existence of a new class of contact metric manifolds}, Canad. Math. Bull.
  43~(4) (2000) 440--447.
\newblock \href {http://dx.doi.org/10.4153/CMB-2000-052-6}
  {\path{doi:10.4153/CMB-2000-052-6}}.
\newline\urlprefix\url{https://doi.org/10.4153/CMB-2000-052-6}

\bibitem{faghfouriGhaffarzade:doublywarped}
M.~Faghfouri, N.~Ghaffarzadeh,
%\href{http://dx.doi.org/10.1007/s13370-014-0299-y}{
  On doubly warped product
  submanifolds of generalized $(\kappa ,\mu )$-space forms%}
  , Afrika Matematika
  26~(7-8) (2015) 1443--1455. \href {http://dx.doi.org/10.1007/s13370-014-0299-y}
  {\path{doi:10.1007/s13370-014-0299-y}}.
\newline\urlprefix\url{http://dx.doi.org/10.1007/s13370-014-0299-y}

\bibitem{faghfouriGhaffarzade:invariant}
M.~Faghfouri, N.~Ghaffarzadeh,
%\href{http://gjarcmg.geometry-math-journal.ro/submission/}  {
Chen's inequality
  for invariant submanifolds in a generalized $(\kappa,\mu)$-space forms
  %}
  ,
  Glob. J. Adv. Res. Class. Mod. Geom. 4~(2) (2015) 86--101.
\newline\urlprefix\url{http://gjarcmg.geometry-math-journal.ro/submission/}

\bibitem{faghfouriGhaffarzade:CtotallyReal}
M.~Faghfouri, N.~Ghaffarzadeh, Chen's inequality for c-totally real
  submanifolds in a generalized $(\kappa,\mu)$-space forms, arXiv preprint
  arXiv:1512.07647v2.

\bibitem{kumar.rani.nagachi:Onsectionalcurvatures}
R.~Kumar, R.~Rani, R.~K. Nagaich, \href{https://doi.org/10.1155/2007/93562}{On
  sectional curvatures of {$(\epsilon)$}-{S}asakian manifolds}, Int. J. Math.
  Math. Sci. (2007) Art. ID 93562, 8\href
  {http://dx.doi.org/10.1155/2007/93562} {\path{doi:10.1155/2007/93562}}.
\newline\urlprefix\url{https://doi.org/10.1155/2007/93562}

\bibitem{GhaffarzadehFaghfouri2}
N.~Ghaffarzadeh, M.~Faghfouri, On tangent sphere bundles with  contact pseudo-metric structures, Submited.

\end{thebibliography}

\end{document}